\documentclass[reqno, 10pt]{amsart}
\usepackage{amssymb,amsmath,amsfonts,bm, mathtools,dsfont}
\usepackage{epsfig}
\usepackage{graphicx}
\usepackage{esint}
\usepackage{enumerate}
\usepackage{verbatim}
\usepackage{xcolor}
\usepackage{bbm}
\usepackage{caption}
\usepackage{geometry}
\usepackage{enumitem}
\usepackage{bm}
\usepackage{lipsum}
\usepackage{mathtools}
\usepackage{esint}
\usepackage[makeroom]{cancel}

\newcommand{\Z}{\mathbb{Z}}

\newcommand{\R}{\mathbb{R}}
\newcommand{\N}{\mathbb{N}}

\newcommand{\1}{\mathbbm{1}}

\newcommand{\T}{\mathbb{T}}
\newcommand{\C}{\mathbb{C}}
\newcommand{\ve}{\varepsilon}

\date{} 

\theoremstyle{plain}
\newtheorem{theorem}{Theorem}[section]
\newtheorem{definition}[theorem]{Definition}
\newtheorem{proposition}[theorem]{Proposition}

\newtheorem{lemma}[theorem]{Lemma}
\newtheorem{corollary}[theorem]{Corollary}
\newtheorem{remark}[theorem]{Remark}

\numberwithin{theorem}{section}
\numberwithin{equation}{section}
\numberwithin{figure}{section}

\let\oldtocsection=\tocsection
 
\let\oldtocsubsection=\tocsubsection
 
\let\oldtocsubsubsection=\tocsubsubsection
 
\renewcommand{\tocsection}[2]{\hspace{0em}\oldtocsection{#1}{#2}}
\renewcommand{\tocsubsection}[2]{\hspace{1em}\oldtocsubsection{#1}{#2}}
\renewcommand{\tocsubsubsection}[2]{\hspace{2em}\oldtocsubsubsection{#1}{#2}}

\begin{document}

\parskip=4pt

%\vspace*{1cm}
\title[]
{Complex-Valued Modified Zakharov Kuznetsov equation}

\author[Carlos E. Kenig]{Carlos E. Kenig}
\address{Carlos E. Kenig, 
Department of Mathematics,
University of Chicago}
\email{cek@math.uchicago.edu}

\author[Nata\v{s}a Pavlovi\'{c}]{Nata\v{s}a Pavlovi\'{c}}
\address{Nata\v{s}a Pavlovi\'{c},  
Department of Mathematics, The University of Texas at Austin.}
\email{natasa@math.utexas.edu}

\author[Gigliola Staffilani]{Gigliola Staffilani}
\address{Gigliola Staffilani,
Department of Mathematics,
Massachusetts Institute of Technology}
\email{gigliola@math.mit.edu}

\author[Luisa Velasco]{Luisa Velasco}
\address{Luisa Velasco,  
Department of Mathematics, The University of Texas at Austin.}
\email{lmvelasco@utexas.edu}
\begin{abstract} 
Motivated by the introduction of the Zakharov-Kuznetsov equation as a higher dimensional generalization of the Korteweg-de Vries equation, in this paper we introduce the modified Zakharov-Kuznetsov (mZK) equation as a 2-dimensional generalization of the complex-valued modified Korteweg-de Vries equation. We initiate the mathematical analysis of the mZK equation
on $\T^2$ by proving a local well-posedness result in Sobolev spaces and by establishing a failure of uniform continuity. We also note that the real-valued version of the mZK equation has physical significance. 

\end{abstract}
\maketitle
\tableofcontents

\section{Introduction}\label{sec:intro}

In this work we will study the local well-posedness of the equation 
\begin{align}\label{eq:mZK}
\partial_t u + \partial_{x_1}\Delta u = 6|u|^2\partial_{x_1}u \qquad (t,(x_1,x_2)) \in \R \times \T^2
\end{align}
where $u: \R\times \T^2 \to \C$, which we call the \textit{modified} Zakharov-Kuznetsov (mZK) equation. We introduce this equation as a 2-dimensional generalization of the complex-valued modified Korteweg-de Vries (mKdV) equation
\begin{align}\label{eq:mKdV}
    \partial_t u + \partial_{x}^3 u = |u|^2 \partial_{x} u, \qquad (t,x) \in \R \times \R, \T,
\end{align} studied in e.g. \cite{Birnir1996,Chapouto2021,Chapouto2023,Grunrock2004, Kenig2025}. Before we describe the details of this generalization, we recall what is known about higher dimensional generalizations of the 1-dimensional Korteweg-de Vries (KdV) equation.

In 1 dimension, the real-valued KdV equation 
\begin{align}\label{eq:KdV}
    \partial_tu + \partial_x^3u = \partial_{x}(u^2), \qquad (t,x) \in \R\times \R, \T
\end{align}
is a well-known and well-studied model, see e.g. \cite{KdV1895, Bourgain1993KdV, Kenig1991, ChenGuo2020, Colliander2003, Guo2009, Kato1983}, for many systems of weakly nonlinear interacting waves including shallow water waves \cite{Brooke1967,KdV1895} and ion-acoustic waves in various plasma environments \cite{Taniuti1966}. We also note that the equation \eqref{eq:KdV} is known to be an integrable system \cite{Gardner1974,Lax1968}. The KdV equation \eqref{eq:KdV} has been generalized to higher dimensions. In particular, a 2 dimensional generalization of the KdV equation is given by the Zakharov-Kuznetsov (ZK) equation
\begin{align}\label{eq:ZK}
    (\partial_t + \partial_{x_1}\Delta) u = \partial_{x_1}(u^2),\qquad (t,(x_1,x_2)) \in \R\times \R^2,\, \T^2,\, \R \times \T
\end{align}
which models the effects of a magnetic field on ion-acoustic sound waves in plasma \cite{GrunrockHerr2014, Linares2019, MolinetPilod2015, MunroParkes1999, Seadawy2014}. The anisotropy in equation \eqref{eq:ZK} in the $x_1$ direction reflects that the plasma waves propagate more freely along magnetic field lines. In particular, the magnetic field $\mathbf{B} = B_0\mathbf{e}_{x_1}$ is assumed to be uniform and aligned along the $x_1$-direction.
We also note that this type of ZK equation with a weak nonlinearity is a starting point for the derivation of the three wave kinetic equation \cite{Staffilani2021,HaRoStTr2022}. 

Motivated by the introduction of the ZK equation \eqref{eq:ZK} as a generalization of the KdV equation \eqref{eq:KdV}, we think of the mZK equation \eqref{eq:mZK} as a 2-dimensional generalization of the complex-valued mKdV equation \eqref{eq:mKdV}.

In addition to this mathematical motivation, the real-valued version of \eqref{eq:mZK} has physical significance. In particular, Chiron showed \cite{Chiron2014} that the cubic nonlinearity appears in the limit of degenerate cases of the Euler-Poisson system. 
Moreover, a version of the equation \eqref{eq:mZK} has been used as a physical model for ion-acoustic waves in plasma.  More precisely, the work of Younas, Ren, Baber, Yasin, Shahzah \cite{Younas2023} studies the (3 + 1)-dimensional equation given below
    \begin{align}\label{eq:gKV-ZK}
        \partial_t u + \beta \partial_{x_1}^3u + \nu\partial_{x_1}(\Delta u) = \alpha u^2\partial_{x_1}u,
    \end{align}
which corresponds to \eqref{eq:mZK} for a specific choice of coefficients $\alpha,\beta, \nu$. 
The equation \eqref{eq:gKV-ZK} serves as a model for the effects of a magnetic field on weak ion-acoustic waves. In \cite{Younas2023}, the authors employ an expansion method to obtain an expression for soliton solutions and computational methods are applied to draw the profiles of these solutions and study their dynamics.

In this paper, we initiate the mathematical analysis of the equation \eqref{eq:mZK}. The main results are local well-posedness of solutions to \eqref{eq:mZK} and failure of uniform continuity. We give details about each of these results in the subsequent sections.

    Since we consider complex-valued solutions, the nonlinearity cannot be expressed as a total derivative, in contrast to the real-valued versions mKdV \eqref{eq:mKdV}, KdV \eqref{eq:KdV} and ZK \eqref{eq:ZK} equations as well as their real-valued generalizations with $p$-order nonlinearities (e.g. equation \eqref{eq:gKV-ZK}). This structural difference has significant consequences for both the well-posedness theory and the failure of uniform continuity arguments developed in this work.

\subsection{Local Theory}

Recently, Kenig, Nahmod, Pavlović, Staffilani, and Visciglia \cite{Kenig2025}, studied the well-posedness of the complex-valued mKdV \eqref{eq:mKdV}. 
We adapt the program implemented in \cite{Kenig2025} (which originated in the works of Ionescu and Kenig \cite{IoKe2007} and Koch and Tzvetkov \cite{Koch2003}) to recover the derivative in the nonlinearity in order to prove local well-posedness for the higher dimensional versions of this equation \eqref{eq:mZK}. Since the mKdV \eqref{eq:mKdV} and the mZK \eqref{eq:mZK} equations share a common nonlinearity, the difference in the local theory stems from the mixing of derivatives in the linear part of the mZK equation \eqref{eq:mZK}.

In order to state our result, we first define our notion of solution. 
\begin{definition}\label{def:strong solution}
    We say that $u \in C([-T,T], H^s)$, $s > 0$ is a \textit{strong solution} of \eqref{eq:mZK} if 
    \begin{align}
        u(t) = W(t)u_0 + \int W(t- \tau)|u|^2\partial_{x_1}u(\tau)d\tau.
    \end{align}
    where \begin{align}\label{eq:W}W(t)\phi = \int_{\R^d}\sum\limits_{\mathbf{k} \in \Z^2} e^{i(\mathbf{k}\cdot \mathbf{x} + ( k_1^3 + k_1k_2^2)t)}\hat{\phi}(\mathbf{k}).\end{align}
\end{definition}

Now, we can state the main local well-posedness result.

\begin{theorem}[Local well-posedness]\label{thm:lwp}
    Assume $u_0 \in H^s(\T^2)$ for $s > \frac{5}{3}$. Then there exists a $T > 0$ and a unique strong solution $u \in C([-T,T],H^s)$ to the initial value problem for \eqref{eq:mZK} corresponding to $u_0$.
\end{theorem}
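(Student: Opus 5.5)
The plan follows the energy method of \cite{Kenig2025}, which goes back to Ionescu--Kenig \cite{IoKe2007} and Koch--Tzvetkov \cite{Koch2003}. Because the nonlinearity $|u|^2\partial_{x_1}u$ loses a derivative and is not a total derivative, a direct Picard iteration in $X^{s,b}$ is not expected to close on $\T^2$. Instead, the argument runs through three coupled a priori estimates for smooth solutions on a time interval $[-T,T]$ with $T\le 1$.

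\textbf{Function spaces and the three estimates.} Decompose dyadically in frequency $N=|\mathbf{k}|$ and localize in time on intervals of length $N^{-\alpha}$. The parameter $\alpha>0$ is chosen later so that on each short interval the equation behaves like a perturbation of the linear flow $W(t)$ from \eqref{eq:W}. Build short-time spaces $F^s_T$ (short-time $X^{0,1/2}$-type norms of $P_N u$), the companion spaces $N^s_T$ for the nonlinearity, and energy spaces $E^s_T$ with norm $\|P_{\le 1}u(0)\|^2+\sum_N N^{2s}\sup_{t}\|P_Nu(t)\|_{L^2}^2$. The goal is to prove:
\begin{align*}
&\text{(i)}\ \ \|u\|_{F^s_T}\lesssim \|u\|_{E^s_T}+\big\||u|^2\partial_{x_1}u\big\|_{N^s_T},\\
&\text{(ii)}\ \ \big\||u|^2\partial_{x_1}u\big\|_{N^s_T}\lesssim \|u\|_{F^s_T}^3,\\
&\text{(iii)}\ \ \|u\|_{E^s_T}^2\lesssim \|u_0\|_{H^s}^2+T^{\theta}\|u\|_{F^s_T}^4\,(1+\|u\|_{F^s_T}^2).
\end{align*}
Estimate (i) is the standard linear estimate in short-time spaces. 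Estimates (ii) and (iii) rest on $L^4$-type Strichartz/bilinear estimates on $\T^2$ for the symbol $\omega(\mathbf{k})=k_1^3+k_1k_2^2$ localized to time scales $N^{-\alpha}$. These come from counting lattice points $\mathbf{k}$ in the regions $|\omega(\mathbf{k})-\tau|\lesssim N^{\alpha}$, $|\mathbf{k}|\sim N$. The trilinear estimate (ii) is essentially insensitive to structure once the time localization absorbs one derivative, so $\alpha$ is dictated by the worst high$\times$low$\times$low interaction in $|u|^2\partial_{x_1}u$.

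\textbf{The energy estimate (main obstacle).} Estimate (iii) is where the derivative must be recovered. Compute $\frac{d}{dt}\|P_Nu\|_{L^2}^2=2\,\mathrm{Re}\int P_N(|u|^2\partial_{x_1}u)\,\overline{P_Nu}$. When $P_N$ falls entirely on $\partial_{x_1}u$ with $|u|^2$ at low frequency, integrate by parts. This gives $-\int \partial_{x_1}(|u|^2)\,|P_Nu|^2$ plus a commutator $[P_N,|u|^2]\partial_{x_1}u$, and both gain the derivative. The genuinely complex term, with $\bar u$ at high frequency $N$ paired against $P_Nu$ and the derivative still on the high mode, cannot be symmetrized this way. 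For it, I would introduce a modified energy $E_N(t)=\|P_Nu\|^2+\mathrm{Re}\sum(\text{quartic correction})$, obtained by a normal-form division by the resonance function $\Omega=\omega(\mathbf{k}_1)-\omega(\mathbf{k}_2)+\omega(\mathbf{k}_3)-\omega(\mathbf{k}_4)$. The hard part is that, unlike mKdV, the ZK symbol gives no clean factorization of $\Omega$. One must show $|\Omega|\gtrsim N^2 N_{\rm low}$ off a thin set and treat the near-resonant set (including $k_1\approx0$ modes, where $\partial_{x_1}$ itself is small) separately via its smallness. I expect this resonance analysis, and the resulting loss balanced against the choice of $\alpha$, to produce the threshold $s>5/3$. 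The correction terms are then estimated in $F^s_T$ using the bilinear bounds, yielding (iii) with a positive power $T^\theta$.

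\textbf{Closing the argument.} Combine (i)--(iii) with a continuity argument in $T$: the map $T\mapsto\|u\|_{E^s_T}+\|u\|_{F^s_T}$ is continuous and tends to $\lesssim\|u_0\|_{H^s}$ as $T\to0$. This gives $\|u\|_{F^s_T}\lesssim\|u_0\|_{H^s}$ for $T=T(\|u_0\|_{H^s})$, hence an a priori $H^s$ bound for smooth solutions. Run the same scheme for the difference $v=u_1-u_2$ of two solutions at regularity $L^2$ (or $H^{s-1}$), where the modified energy handles the difference equation, to get Lipschitz dependence in the weaker norm. Existence follows by approximating $u_0$ by smooth data. Convergence in $C([-T,T],H^s)$ comes from the Bona--Smith argument using frequency envelopes, and the same difference estimate gives uniqueness among strong solutions.
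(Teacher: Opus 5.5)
Your proposal does not yet prove the theorem. It replaces the paper's argument with a short-time $X^{s,b}$ program ($F^s_T,N^s_T,E^s_T$ plus a normal-form modified energy), but every load-bearing step of that program is asserted rather than established:

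\begin{itemize}
\item The short-time $L^4$/bilinear Strichartz bounds for $\omega(\mathbf k)=k_1^3+k_1k_2^2$ on $\T^2$ are only attributed to unspecified lattice-point counting.
\item The time scale $N^{-\alpha}$ is never fixed.
\item The resonance bound $|\Omega|\gtrsim N^2N_{\mathrm{low}}$ off a ``thin set'' is stated without proof, and you note yourself that $\Omega$ has no usable factorization on the lattice.
\item The threshold $s>5/3$ is only ``expected'' to come out. Nothing in the plan computes a threshold, so you have not shown that (ii)--(iii) close for every $s>5/3$, which is what the statement requires.
\item Uniqueness in the full class $C([-T,T],H^s)$ is not covered. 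Your difference estimate needs both solutions to lie in $F^s_T$, and you give no argument that an arbitrary strong solution does.
\end{itemize}

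More importantly, the plan targets the wrong obstacle. Because the coefficient $|u|^2$ is real, no term in $\frac{d}{dt}\|J^s u\|_{L^2}^2$ costs more than $\|\nabla u\|_{L^\infty}$. Write $J^s(|u|^2\partial_{x_1}u)=[J^s,|u|^2]\partial_{x_1}u+|u|^2\partial_{x_1}J^su$. The second piece is integrated by parts, and the commutator is handled by Kato--Ponce (Lemma~\ref{lem:KP}). This gives Lemma~\ref{lem:energy bound}: $\frac{d}{dt}\|J^su\|^2\lesssim\|J^su\|^3\|\nabla u\|_{L^\infty}+\|J^su\|^4$. The configuration you single out ($\bar u$ at frequency $\sim N$, derivative on another high mode) is a three-high interaction, absorbed in $\|J^s(|u|^2)\|_{L^2}\|\partial_{x_1}u\|_{L^\infty}$. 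No normal form is needed.

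The genuine difficulty, which your proposal never addresses concretely, is controlling $\|\nabla u\|_{L^1_TL^\infty}$ when $s<2$, where Sobolev embedding fails. The paper does this with the Koch--Tzvetkov device (Proposition~\ref{lem:lower s}):
\begin{itemize}
\item At frequency $2^j$, split $[0,T]$ into $2^{2j}$ intervals of length $2^{-2j}T$.
\item On each interval apply the Strichartz bound $\|W(t)Q^ju_0\|_{L^2_IL^\infty}\lesssim 2^{-j/3}\|Q^ju_0\|_{L^2}$ of Lemma~\ref{lem:strichartz}.
\item Treat the Duhamel term crudely via $\|J^{2/3+}(|u|^2\partial_{x_1}u)\|_{L^2}\lesssim\|u\|_{H^s}^3$, and sum.
\end{itemize}
The net loss of $2^{2j/3}$ on $\nabla u$ is exactly where $s>1+\frac23$ comes from.

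With that bound, the paper finishes with:
\begin{itemize}
\item a bootstrap on the Galerkin-truncated system, which also sidesteps constructing smooth solutions on a uniform interval;
\item a Cauchy argument in $C_TL^2$ and interpolation;
\item weak continuity plus the energy inequality, for strong continuity;
\item an $L^2$ Gronwall argument for uniqueness, using $\nabla u\in L^1_TL^\infty$, which the same Strichartz argument gives for any strong solution.
\end{itemize}

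To keep your framework, you would at minimum have to prove the short-time multilinear estimates with explicit exponents and show they close for every $s>5/3$. Otherwise, drop the normal-form step and insert the refined-Strichartz bound above.
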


The proof of this theorem is the content of Section \ref{sec:lwp}. Since $\frac{5}{3} < 2$, the standard energy method does not apply. Thus, we proceed with the recovery of the derivative in the nonlinearity in the spirit of \cite{Kenig2025} as follows.

\begin{itemize}
    \item For each $N \in \N$, we introduce a system truncated in frequency \eqref{eq:fin dim} below. The well-posedness of this system follows easily from the theory of nonlinear ODE. However, these solutions may depend critically on the truncation parameter $N$. Thus, we need to work to obtain uniform bounds in $N$ in order to pass to the limit.
    \item In order to obtain these uniform bounds, we crucially apply the Strichartz estimate which originates in the work \cite{Linares2019} of Linares, Panthee, Robert, and Tzvetkov on the ZK equation  \eqref{eq:ZK} and is formulated for our purposes in Lemma \ref{lem:strichartz}. The proof of this Strichartz estimate relies on that fact one can diagonalize the dispersion relation $k_1^3 + k_1k_2^2$ by rotating coordinates. While the mZK \eqref{eq:mZK} and ZK \eqref{eq:ZK} equations share a common linear part, we must still deal with the cubic nonlinearity.
    \item Next, we show that the sequence of solutions to the truncated problem is Cauchy and that a solution $u$ of the mZK \eqref{eq:mZK} is obtained as a limit of the truncated system. 
    \item We first demonstrate that this solution $u$ is weakly continuous in $H^s$ and then prove that $u$ is continuous in the strong $H^s$ topology.
    \item Lastly, we prove uniqueness of solutions. 
\end{itemize}

In our proof of existence of solutions we use the Kato-Ponce estimate on $\T^d$ expressed in terms of the multiplier $J^s$ \eqref{eq:Js on torus} for $s \geq 1$, (see Lemma \ref{lem:KP}). Since we did not find the version of this estimate on $\T^d$ with $d > 1$ in the literature, in order to make the paper self-contained, in Section Appendix \ref{sec:apB} we state and prove this Kato-Ponce estimate. The proof is inspired by the proof of the Kato-Ponce estimate on $\T$ obtained by Kenig and Ionescu \cite{IoKe2007} and a transference principle formulated and proved by Roncal and Stinga \cite{RoncalStinga} for the fractional laplacian.

Finally, we note that our result can be understood as an analogue of the well-posedness result obtained by Linares, Panthee, Robert, and Tzvetkov \cite{Linares2019} for the ZK equation \eqref{eq:ZK}.

Based on the results that are obtained for the real and complex valued KdV, we expect that the real valued version of the equation \eqref{eq:mZK} can be addressed using similar arguments. In fact, we expect potentially better behavior, including well-posedness at lower regularity via $X_{s,b}$-type spaces.

\subsection{Failure of Uniform Continuity for the mZK equation}
Lastly, inspired by the work of Linares, Panthee, Robert, and Tzvetkov \cite{Linares2019} on the ZK equation and the work of Herr \cite{Herr2006} on the derivative nonlinear Schr\"odinger equation, we show the failure of uniform continuity of solutions of \eqref{eq:mZK}. This is the content of Section \ref{sec:fail_unif_cont}.

\begin{theorem}\label{thm:failure}
    Let $s > 5/3$. There exist two positive constants $c, C$ and two sequences $\{u_m\}$ and $\{v_m\}$ solutions to \eqref{eq:mZK} in $C([0,1]: H^s(\T^2))$ such that 
    \begin{align}
        \sup_{t \in [0,1]}\|u_m(t)\|_{H^s} + \|v_m(t)\|_{H^s} \leq C
    \end{align}
    and satisfy 
    \begin{align}\label{eq:fail init data}
        \lim\limits_{m\to \infty}\|u_m(0) - v_m(0)\|_{H^s} = 0,
    \end{align}
    but for every $t \in [0,1]$, 
    \begin{align}\label{eq:fail conclusion}
        \lim\inf_{m\to \infty}\|u_m(t) - v_m(t)\|_{H^s} \geq ct.
    \end{align}
\end{theorem}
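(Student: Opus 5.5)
Following Herr and Linares--Panthee--Robert--Tzvetkov, I would build explicit near-solutions: a low-frequency "modulating" mode plus a high-frequency mode whose phase is shifted by the low-frequency amplitude through the nonlinearity. Concretely, take
\begin{align*}
u_m(0) = \omega\, m^{-1} e^{i x_1} + m^{-s} e^{i m x_1},\qquad v_m(0) = \tilde\omega_m\, m^{-1}e^{ix_1} + m^{-s}e^{imx_1},
\end{align*}
with $\omega$ fixed and $\tilde\omega_m \to \omega$ chosen so that the difference of initial data tends to $0$ in $H^s$. Better: use amplitudes $a_m = m^{-1/2}\cdot$ const on the low mode so that the term $6|u|^2\partial_{x_1}$ applied to the high mode $e^{imx_1}$ produces a phase rotation $\exp(6 i m |a|^2 t)$ of size $O(1)\cdot t$. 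Choosing low-mode amplitudes $a$ and $\tilde a$ with $m(|a|^2-|\tilde a|^2)\to c_0\neq 0$ while $a-\tilde a\to 0$ (e.g. $a=\omega m^{-1/2}$, $\tilde a = (\omega^2+ c_0 m^{-1})^{1/2} m^{-1/2}$, so $|a-\tilde a|\lesssim m^{-1}$), the data converge but the high modes separate at rate $\sim c_0 t$ in $H^s$. The complex nonlinearity is essential: $|u|^2\partial_{x_1}u$ is not a total derivative, so this resonant phase term survives.

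Steps: (1) Write the approximate solution $u^{ap}(t) = a e^{i(x_1 + t - 6|a|^2 t)}\cdots + m^{-s} e^{i(m x_1 + m^3 t + 6 i m|a|^2 t\cdot(-i))}$, i.e. each mode evolves by the linear phase $k_1^3+k_1k_2^2$ plus the self/cross phase modulation computed from $6|u|^2\partial_{x_1}u$ retaining only the resonant terms $|a|^2$ and $|m^{-s}|^2$ contributions. (2) Compute the error $F = \partial_t u^{ap} + \partial_{x_1}\Delta u^{ap} - 6|u^{ap}|^2\partial_{x_1}u^{ap}$; it consists of nonresonant products such as $a^2 \overline{m^{-s}}\, m\, e^{i(2-m)x_1}$ and $a\, m^{-2s} m\, e^{i(2m-1)x_1}$, and show $\|F\|_{L^\infty_t H^s} \to 0$ (the worst term is $|a|^2 m^{-s}\cdot m\cdot m^{s}$-type in $H^s$, which is $O(1)$ only on the resonant part already removed; the off-diagonal ones carry extra negative powers of $m$, possibly after a further correction term if needed). (3) Let $u_m$ be the true solution from Theorem \ref{thm:lwp} with the same data; by the a priori estimates from Section \ref{sec:lwp} its existence time is uniform (data bounded in $H^s$, in fact small low part), giving $\sup_t\|u_m\|_{H^s}\le C$ on $[0,1]$ after choosing the constants small. (4) Stability: estimate $w = u_m - u^{ap}$ by an energy estimate in $L^2$ (where $\partial_{x_1}$ in the nonlinearity is handled by integration by parts, costing $\|\nabla u\|_{L^\infty}$, bounded since $s>5/3>1$ via the Strichartz Lemma \ref{lem:strichartz} or Sobolev) to get $\|w\|_{L^2}\lesssim \|F\|_{L^1_tL^2}$ small like $m^{-s-\delta}$, then interpolate with the uniform $H^{s'}$ bound for $s'>s$ (high mode has $H^{s'}$ norm $m^{s'-s}$) to get $\|w\|_{H^s}\to 0$.

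(5) Conclude: $\|u_m(t)-v_m(t)\|_{H^s}\ge \|u^{ap}-v^{ap}\|_{H^s} - o(1) = |e^{6imt(|a|^2-|\tilde a|^2)}-1| + o(1)\ge c t$ for $t\in[0,1]$ with $c_0$ small. The main obstacle is step (4): the $L^2$ energy estimate for the difference must close with $\|w\|_{L^2}$ smaller than $m^{-s}$ by a power, which requires the error $F$ to be $o(m^{-s})$ in $L^2$; I would first check whether the low mode amplitude $m^{-1/2}$ makes the nonresonant terms small enough, and otherwise add a next-order corrector to $u^{ap}$ solving the linear equation with the bad forcing (non-resonant, so divisible by the large phase mismatch $\sim m^2$).
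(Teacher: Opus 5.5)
Your proposal is essentially the paper's argument, and it is correct once the choice of $\tilde a$ below is fixed. The common ingredients are a high mode of size $\sim r m^{-s}$ plus a low mode of size $\sim m^{-1/2}$ whose resonant contribution rotates the high-mode phase by $O(t)$. The difference is then controlled by an $L^2$ energy estimate closed with Gronwall, using the Strichartz-based bound on $\|\nabla u\|_{L^1_tL^\infty_x}$; Sobolev alone does not put $\nabla u$ in $L^\infty$ for $s<2$. Interpolation with a higher Sobolev norm finishes the stability step.

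The only structural difference is that the paper takes the low mode to be the constant $c_{m,1}=m^{-1/2}$ versus $c_{m,0}=0$, which leaves the single error $R_{m,1}=O(m^{1/2-2s})$. Your $e^{ix_1}$ mode adds a nonresonant term $a^2\bar b\,e^{i(2-m)x_1}$ of size $m^{-1-s}=o(m^{-s})$, so no corrector is needed.

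Your example for $\tilde a$ is mis-scaled. With $\tilde a=(\omega^2+c_0m^{-1})^{1/2}m^{-1/2}$ you get $m(|\tilde a|^2-|a|^2)=c_0m^{-1}\to 0$, so there is no separation. In fact $m\bigl||a|^2-|\tilde a|^2\bigr|\lesssim m^{1/2}|a-\tilde a|$ forces $|a-\tilde a|\gtrsim m^{-1/2}$, not $\lesssim m^{-1}$. Take instead $a=m^{-1/2}$, $\tilde a=0$ as the paper does, or $\tilde a=(\omega^2-c_0)^{1/2}m^{-1/2}$. The data then still converge, at rate $m^{-1/2}$.
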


We will prove the failure of uniform continuity of the flow map for \eqref{eq:mZK} by adapting the procedure outlined in \cite{Linares2019} and \cite{Herr2006} to the context of the complex-valued mZK equation \eqref{eq:mZK}. 
We will carefully construct a family of approximate solutions from which we will demonstrate growth in the sense of \eqref{eq:fail conclusion}.  
We note that while we draw inspiration from \cite{Linares2019}, our construction of approximate solutions is very different to accommodate the cubic nonlinearity and complex-valued solutions.  In particular, the higher order nonlinearity requires more careful treatment in estimating the difference between the approximate and true solutions. Further, we find the example of \cite{Herr2006} for the Derivative NLS more suitable for constructing the families of approximate solution for \eqref{eq:mZK}: see also \cite{ChristCollianderTao} and references therein.

\textbf{Acknowledgements.} CK is supported in part by NSF DMS-2153794 and DMS-2052710.  NP is partially supported by the NSF grants DMS-1840314, DMS-2052789 and DMS-2511517. GS is partially supported by the NSF grant DMS-2306378 and the Simons
Foundation Collaboration Grant on Wave Turbulence.
LV is partially supported by the NSF grants DMS-1840314, DMS-2052789 and DMS2511517 through NP’s grants and the NSF GRFP. 
This material is based upon work supported by the
National Science Foundation under Grant No. DMW-2424139, while NP, GS and LV were in residence at
the Simons Laufer Mathematical Sciences Institute in Berkeley, California during the Fall 2025 semester.
The authors would like to thank the Simons Laufer Mathematical Institute for their kind hospitality.

\section{Local Theory}\label{sec:lwp}

In this section, we will prove Theorem \ref{thm:lwp}. First, we introduce and address the finite dimensional problem obtained by truncating in frequency. That is, for $N \in \N$, we consider the following Cauchy problem:
\begin{align}\label{eq:fin dim}
    \begin{cases}
        (\partial_t +\partial_{x_1}\Delta) u = 6\Pi_N(|\Pi_N u|^2\partial_{x_1} \Pi_Nu), \quad (t,\mathbf{x}) \in \R \times \T^2,\\
        u(x,0) = u_0.
    \end{cases}
\end{align}
where $\Pi_N$ denotes the projection 
\begin{align*}
    \Pi_N\left(\sum\limits_{n \in \Z^2}u_ne^{in\cdot x}\right) = \sum\limits_{|n| \leq N}u_ne^{in\cdot x}.
\end{align*}

We can also define 
\begin{align*}
    \Pi_{> N} = I - \Pi_N
\end{align*}
so that 
\begin{align*}
    \Pi_{> N}\left(\sum\limits_{n \in \Z^2}u_ne^{in\cdot x}\right) = \sum\limits_{|n| > N}u_ne^{in\cdot x}.
\end{align*}

\begin{remark}\label{rem:ODE wellposed}
    For each $N,$ the Cauchy problem \eqref{eq:fin dim} is well-posed in $H^s$ for $s \geq 0$. This can be seen if we recognize that the solution $u_N$ can be expressed as 
    \begin{align}
        u_N = \Pi_Nu_N + \Pi_{> N}u_N,
    \end{align}
    where the second term evolves according to the linear mZK flow and the first term evolves according to a system of $\sim N^2$ nonlinear ODEs. However, we note that the solutions from this theory may depend critically on $N$. For the rest of this section, we define $\Phi_N(t)$ to be flow associated to \eqref{eq:fin dim}. 

\end{remark}

We will proceed by proving estimates that are \textit{independent of $N$} and then pass to the limit.

We adapt the notation from \cite{Kenig2025}:
\begin{align}\label{eq:Js on torus}
    \widehat{J^sg}(\mathbf{k})& = (1 + |\mathbf{k}|^2)^{s/2}\hat{g}(\mathbf{k})
\end{align}

 Next we show a local in time energy estimate that is uniform in $N$ for the flow $\Phi_N$ corresponding to \eqref{eq:fin dim}. 
\begin{lemma}\label{lem:energy bound}
Let $s > 1$. Then there exists $C > 0$ such that 
\begin{align}\label{eq:lem3.3}
    \frac{d}{dt}\|J^s u_N\|_{L^2}^2 = \frac{d}{dt}\|J^s \Pi_N u_N\|_{L^2}^2 \leq C\left(\|J^s \Pi_Nu_N\|_{L^2}^3\|\nabla \Pi_Nu_N\|_{L^\infty} + \|J^s\Pi_Nu_N\|_{L^2}^4\right)
\end{align}
where $u_N(t) = \Phi_N(t)u_0$ where $\Phi_N$ is the flow corresponding to \eqref{eq:fin dim}.
\end{lemma}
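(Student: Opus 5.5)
The plan is to work directly with the frequency-truncated equation \eqref{eq:fin dim}. We split $u_N=\Pi_N u_N+\Pi_{>N}u_N$ and set $v:=\Pi_N u_N$.

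\textbf{Reduction to the low modes.} Applying $\Pi_{>N}$ to \eqref{eq:fin dim} kills the nonlinearity, since $\Pi_{>N}\Pi_N=0$. So $\Pi_{>N}u_N$ solves the linear equation $(\partial_t+\partial_{x_1}\Delta)w=0$. Its Fourier coefficients only rotate in phase, because the symbol $k_1^3+k_1k_2^2$ is real. Hence $\|J^s\Pi_{>N}u_N\|_{L^2}$ is constant in time. Since $J^s$ is a Fourier multiplier, it commutes with $\Pi_N$ and $\Pi_{>N}$, and the two pieces are $L^2$-orthogonal. This gives
\[
\|J^s u_N\|_{L^2}^2=\|J^s v\|_{L^2}^2+\|J^s\Pi_{>N}u_N\|_{L^2}^2 ,
\]
which yields the equality in \eqref{eq:lem3.3}.

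\textbf{Energy identity for $v$.} The function $v$ satisfies $\partial_t v+\partial_{x_1}\Delta v=6\Pi_N(|v|^2\partial_{x_1}v)$. We pair with $J^s v$ and take real parts. The dispersive term drops out because $\partial_{x_1}\Delta$ is skew-adjoint and commutes with $J^s$. Since $\Pi_N$ is self-adjoint and $\Pi_N J^s v=J^s v$, the projection in front of the nonlinearity can be removed. This leaves
\[
\frac{d}{dt}\|J^s v\|_{L^2}^2=12\,\mathrm{Re}\int_{\T^2} J^s\big(|v|^2\partial_{x_1}v\big)\,\overline{J^s v}\,dx .
\]
We then write $J^s(|v|^2\partial_{x_1}v)=|v|^2\partial_{x_1}J^s v+[J^s,|v|^2]\partial_{x_1}v$.

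\textbf{Main term.} The first piece is handled by integration by parts. Because $|v|^2$ is real, $\mathrm{Re}\int |v|^2\partial_{x_1}J^sv\,\overline{J^sv}=-\tfrac12\int\partial_{x_1}(|v|^2)\,|J^sv|^2$. This is bounded by $2\|v\|_{L^\infty}\|\nabla v\|_{L^\infty}\|J^s v\|_{L^2}^2$. Since $s>1$, the Sobolev embedding $H^s(\T^2)\hookrightarrow L^\infty$ gives $\|v\|_{L^\infty}\lesssim\|J^sv\|_{L^2}$, which produces the term $\|J^sv\|^3_{L^2}\|\nabla v\|_{L^\infty}$.

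\textbf{Commutator term (main obstacle).} This is where the torus version of the Kato--Ponce commutator estimate, Lemma~\ref{lem:KP}, is needed. I expect it in the form
\[
\|[J^s,f]g\|_{L^2}\lesssim \|\nabla f\|_{L^\infty}\|J^{s-1}g\|_{L^2}+\|J^sf\|_{L^2}\|g\|_{L^\infty},
\]
possibly with additional lower-order terms such as $\|f\|_{H^s}\|g\|_{L^2}$ coming from the zero mode and low frequencies on $\T^2$. We apply it with $f=|v|^2$ and $g=\partial_{x_1}v$.

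The first term is at most $2\|v\|_{L^\infty}\|\nabla v\|_{L^\infty}\|J^sv\|_{L^2}$. For the second, the product (fractional Leibniz) estimate gives $\|J^s(|v|^2)\|_{L^2}\lesssim\|v\|_{L^\infty}\|J^sv\|_{L^2}$, so it is at most $\|v\|_{L^\infty}\|J^sv\|_{L^2}\|\nabla v\|_{L^\infty}$. In both cases Sobolev again gives a bound of the form $\|J^sv\|^2\|\nabla v\|_\infty$, and Cauchy--Schwarz against $\|J^sv\|_{L^2}$ yields $\|J^sv\|^3_{L^2}\|\nabla v\|_{L^\infty}$.

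Any lower-order remainder from the torus Kato--Ponce lemma should involve at most $\|J^sv\|^2_{L^2}\cdot\|v\|_{L^\infty}\|\partial_{x_1}v\|_{L^2}$-type quantities. After bounding every factor by $\|J^sv\|_{L^2}$ (using $s>1$), these give the $\|J^sv\|_{L^2}^4$ term. The delicate point is to verify that the precise form of Lemma~\ref{lem:KP} never requires $\|J^s\partial_{x_1}v\|$, i.e. that no loss of a derivative occurs. This is exactly what the commutator structure guarantees.
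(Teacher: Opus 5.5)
Your proposal is correct and follows the paper's proof essentially step for step. Both use the orthogonal splitting with conserved high modes, the decomposition $J^s(|v|^2\partial_{x_1}v)=|v|^2\partial_{x_1}J^sv+[J^s,|v|^2]\partial_{x_1}v$ with integration by parts on the main term, and Lemma~\ref{lem:KP} applied with $f=|v|^2$, $g=\partial_{x_1}v$. That lemma's extra term $\|f\|_{L^\infty}\|J^{s-1}g\|_{L^2}\lesssim\|J^sv\|_{L^2}^3$ is exactly what produces the quartic term you anticipated. The one minor difference is that the paper bounds $\|J^s(|v|^2)\|_{L^2}$ by the elementary algebra estimate of Lemma~\ref{lem:sq}, giving $\lesssim\|J^sv\|_{L^2}^2$, instead of a fractional Leibniz rule on $\T^2$; that is enough here, since you discard the Leibniz gain via Sobolev anyway.
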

\begin{proof}

Before we proceed, for $\boldsymbol{\ell} \in \Z^2$, we define the $\boldsymbol{\ell}$th Fourier coefficient of $u_N$ as follows
\begin{align}
    u_{N,\boldsymbol{\ell}} = \frac{1}{(2\pi)^2}\int_{\T^2}u_N(\mathbf{x})e^{-i\boldsymbol{\ell}\cdot \mathbf{x}}d\mathbf{x}. 
\end{align}

We make use of the following decomposition,
\begin{align}
    u_N = \Pi_N u_N + \Pi_{> N}u_N, 
\end{align}
to write
\begin{align}
    \|J^su_N\|_{L^2}^2 &= \int_{\T^2}J^s\left(\Pi_N u_N + \Pi_{>N}u_N\right)\overline{J^s\left(\Pi_N u_N + \Pi_{> N}u_N\right)} \,dz\notag\\
     & =\int_{\T^2}J^s\Pi_N u_N \overline{J^s\Pi_Nu_N}\,dz + 2Re \int_{\T^2}J^s\Pi_N u_N \overline{J^s\Pi_{>N}u_N}\,dz + \int_{\T^2}J^s\Pi_{>N}u_N\overline{J^s\Pi_{> N}u_N}\,dz\notag\\
         & =  \|J^s \Pi_N u_N\|_{L^2}^2 + \|J^s\Pi_{>N}u_N\|_{L^2}^2,
\end{align}
where we obtain the last line thanks to\begin{align}\label{eq:term that is zero}
\int_{\T^2}J^s\Pi_Nu_N\overline{J^s\Pi_{>N}u_N}\,dz = 0,
\end{align} since $\Pi_Nu_N$ and $\Pi_{>N}u_N$ have disjoint frequency support.

Taking the derivative in time of $\|J^s\Pi_{>N}u_N\|_{L^2}$, we observe that
\begin{align}
    \frac{d}{dt}\|J^s\Pi_{>N}u_N\|_{L^2}^2 &= 2Re\Bigg(\sum\limits_{|\boldsymbol{\ell}| > N}(1 + |\boldsymbol{\ell}|^2)^s\dot{u}_{N,\boldsymbol{\ell}}\overline{u}_{N,\boldsymbol{\ell}}\Bigg)\notag\\
         & = 2Re \Bigg(\sum\limits_{|\boldsymbol{\ell}| > N}(1 + |\boldsymbol{\ell}|^2)^s\Bigg[i(\ell_{1}^3 + \ell_{1}\ell_{2}^2)u_{N,\ell}\Bigg]\overline{u}_{N,\ell}\Bigg)\notag\\
         & = 2\sum\limits_{|\boldsymbol{\ell}| > N}(1 + |\boldsymbol{\ell}|^2)^sRe\Big[i(\ell_{1}^3 + \ell_{1}\ell_{2}^2)|u_{N,\boldsymbol{\ell}}|^2\Big]\notag\\
         & = 0.\notag
\end{align}

Thus, 
\begin{align}\label{eq:full u_N}
    \frac{d}{dt}\|J^su_N\|_{L^2}^2 = \frac{d}{dt}\|J^s \Pi_N u_N\|_{L^2}^2 + \frac{d}{dt}\|J^s \Pi_{>N}u_N\|_{L^2}^2 = \frac{d}{dt}\|J^s\Pi_N u_N\|_{L^2}^2.
\end{align}
Then, taking the derivative in time of $\|J^s \Pi_Nu_N\|_{L^2}^2$ and substituting from \eqref{eq:fin dim}, we have
    \begin{align}\label{eq:truncated t derivative}
        \frac{d}{dt}\|J^s \Pi_N u_N\|_{L^2}^2 &= 2Re\Bigg(\sum\limits_{|\boldsymbol{\ell}| \leq N}(1 + |\boldsymbol{\ell}|^2)^s\dot{u}_{N,\boldsymbol{\ell}}\overline{u}_{N,\boldsymbol{\ell}}\Bigg)\notag\\
         & = 2Re \Bigg(\sum\limits_{|\boldsymbol{\ell}| \leq N}(1 + |\boldsymbol{\ell}|^2)^s\Bigg[i(\ell_{1}^3 + \ell_{1}\ell_{2}^2)u_{N,\ell}\Bigg]\overline{u}_{N,\ell} \notag\\
         & \qquad \qquad + i6\sum\limits_{\boldsymbol{\ell} \leq N}(1 + |\boldsymbol{\ell}|^2)^s\Bigg[\sum\limits_{\substack{\boldsymbol{\ell} = \mathbf{k}_1 - \mathbf{k}_2 + \mathbf{k}_3\\\mathbf{k}_1,\mathbf{k}_2,\mathbf{k}_3 \leq N}}k_{3,1}u_{N,\mathbf{k}_1}\overline{u}_{N,\mathbf{k}_2}u_{N,\mathbf{k}_3}\Bigg]\overline{u}_{N,\boldsymbol{\ell}}\Bigg)\notag\\
         & = 2\sum\limits_{|\boldsymbol{\ell}| \leq N}(1 + |\boldsymbol{\ell}|^2)^s\underbrace{Re\Big[i(\ell_{1}^3 + \ell_{1}\ell_{2}^2)|u_{N,\boldsymbol{\ell}}|^2}_{0}\Big] \notag\\
         &\qquad \qquad + 12Re\Bigg[i\sum\limits_{\boldsymbol{|\ell}| \leq N}(1 + |\boldsymbol{\ell}|^2)^s\Bigg(\sum\limits_{\substack{\boldsymbol{\ell} = \mathbf{k}_1 - \mathbf{k}_2 + \mathbf{k}_3\\\mathbf{k}_1,\mathbf{k}_2,\mathbf{k}_3 \leq N}}k_{3,1}u_{N,\mathbf{k}_1}\overline{u}_{N,\mathbf{k}_2}u_{N,\mathbf{k}_3}\overline{u}_{N,\boldsymbol{\ell}}\Bigg)\Bigg]\notag\\
         & = 12Re \left(i\sum\limits_{|\boldsymbol{\ell}|\leq N}(1 + |\boldsymbol{\ell}|^2)^s\sum\limits_{\substack{\boldsymbol{\ell} = \mathbf{k}_1 - \mathbf{k}_2 + \mathbf{k}_3\\\mathbf{k}_1,\mathbf{k}_2,\mathbf{k}_3 \leq N}}k_{3,1}u_{N,\mathbf{k}_1}\overline{u}_{N,\mathbf{k}_2}u_{N,\mathbf{k}_3}\overline{u}_{N,\boldsymbol{\ell}}\right).
    \end{align}

    Expressing \eqref{eq:truncated t derivative} in terms of an integral over $\T^2$, we obtain
    \begin{align}
        \frac{d}{dt}\|J^s\Pi_N u_N\|_{L^2}^2 = 12Re\int_{\T^2} J^s\Pi_N(|\Pi_Nu_N|^2(\partial_{x_1} \Pi_Nu_N))\overline{J^s \Pi_Nu_N}dz\nonumber.
    \end{align}
Then, adding and subtracting $12Re\left(|\Pi_Nu_N|^2\partial_{x_1}J^s\Pi_Nu_N\right)\overline{J^s\Pi_Nu_N}$, we have 
    \begin{align}\label{eq:prep energy bound}
    \frac{d}{dt}\|J^s\Pi_N u_N\|_{L^2}^2 &= 
        12Re\int_{\T^2} \left[J^s\left(|\Pi_Nu_N|^2\partial_{x_1}\Pi_Nu_N\right) - |\Pi_Nu_N|^2\partial_{x_1} J^s\Pi_Nu_N\right]\overline{J^s \Pi_Nu_N}dz \notag\\
        &\qquad + 12Re\int_{\T^2}|\Pi_Nu_N|^2\partial_{x_1} J^s \Pi_Nu_N \overline{J^s \Pi_N u_N}dz\nonumber\\
        & = 12Re\int_{\T^2} \left[J^s(|\Pi_Nu_N|^2\partial_{x_1} \Pi_Nu_N) - |\Pi_Nu_N|^2\partial_{x_1} J^s \Pi_Nu_N\right]\overline{J^s \Pi_Nu_N}dz \notag\\
        &\qquad + 6Re\int_{\T^2} |\Pi_Nu_N|^2\partial_{x_1}|J^s \Pi_Nu_N|^2dz\notag\\
        & = 12Re\int_{\T^2} \left[J^s(|\Pi_Nu_N|^2\partial_{x_1}\Pi_N u_N) - |\Pi_Nu_N|^2\partial_{x_1} J^s \Pi_N u_N\right]\overline{J^s \Pi_Nu_N}dz \notag\\
        &\qquad -6\int_{\T^2} \partial_{x_1}|\Pi_Nu_N|^2|J^s \Pi_Nu_N|^2dz.
    \end{align}
    where we integrated by parts in the second term to arrive at \eqref{eq:prep energy bound}.

Then, applying H\"older in each term, we have 
\begin{align*}
    \frac{d}{dt}\|J^s \Pi_N u_N\|_{L^2}^2 &\leq C\Bigg(\left\|J^s\left(|\Pi_Nu_N|^2\partial_{x_1}\Pi_Nu_N\right) - |\Pi_Nu_N|^2\partial_{x_1} J^s\Pi_Nu_N\right\|_{L^2(\T^2)}\left\|J^s\Pi_Nu_N\right\|_{L^2(\T^2)} \\
    &\qquad + \left\|\partial_{x_1} |\Pi_Nu_N|^2\right\|_{L^\infty(\T^2)}\|J^s \Pi_Nu_N\|_{L^2(\T^2)}^2\Bigg)
\end{align*}
Then, applying the Kato-Ponce commutator estimate in Lemma \ref{lem:KP} given by
\begin{align}\|J^s(fg) - fJ^sg\|_{L^2(\T^2)} \leq c\Big\{\|J^sf\|_{L^2(\T^2)}\|g\|_{L^\infty(\T^2)} + \big(\|f\|_{L^\infty(\T^2)} + \|\nabla f\|_{L^\infty(\T^2)}\big)\|J^{s-1}g\|_{L^2(\T^2)}\Big\},\end{align}
with $f = |\Pi_Nu_N|^2$ and $g = \partial_{x_1} \Pi_N u_N$, we obtain 
\begin{align*}
    \frac{d}{dt}\|J^s \Pi_Nu_N\|_{L^2}^2 &\leq C\Bigg[\Big(\|J^s|\Pi_Nu_N|^2\|_{L^2(\T^2)}\left\| \partial_{x_1}\Pi_Nu_N\right\|_{L^\infty(\T^2)} \\
    &\qquad + \bigg[\||\Pi_Nu_N|^2\|_{L^\infty(\T^2)} + \|\nabla |\Pi_Nu_N|^2\|_{L^\infty(\T^2)}\bigg]\left\|J^{s-1}\partial_{x_1} \Pi_Nu_N\right\|_{L^2(\T^2)}\Big)\|J^s\Pi_Nu_N\|_{L^2(\T^2)} \\
    &\qquad + \left\|\partial_{x_1}|\Pi_Nu_N|^2\right\|_{L^\infty(\T^2)}\|J^s \Pi_Nu_N\|_{L^2(\T^2)}^2\Bigg].
\end{align*}
Applying Lemma \ref{lem:sq} and using that $\|\Pi_Nu_N\|_{L^\infty(\T^2)} \lesssim \|J^{1 + \epsilon} \Pi_Nu_N\|_{L^2(\T^2)}$  we have
\begin{align}
    \frac{d}{dt}\|J^s \Pi_Nu_N\|_{L^2}^2 &\leq   C\Bigg[\bigg(\|J^s\Pi_Nu_N\|_{L^2}^2\|\partial_{x_1} \Pi_Nu_N\|_{L^\infty}\notag \\
    &\qquad + \bigg[\|J^s\Pi_N u_N\|_{L^2}^2
    + \|J^s\Pi_N u_N\|_{L^2}\|\nabla\Pi_N u_N\|_{L^\infty}\bigg]\|J^s \Pi_Nu_N\|_{L^2}\bigg)\|J^s \Pi_Nu_N\|_{L^2}\nonumber\\
    &\qquad \qquad + \|J^s\Pi_N u_N\|_{L^2}\|\nabla \Pi_Nu_N\|_{L^\infty}\|J^s \Pi_Nu_N \|_{L^2}^2\Bigg]\nonumber\\
    & \leq C\left[\|J^s \Pi_Nu_N\|_{L^2}^3\|\nabla \Pi_Nu_N\|_{L^\infty} + \|J^s\Pi_N u_N\|_{L^2}^4\right]
\end{align}
which is \eqref{eq:lem3.3}.

\end{proof}

Now we state a Strichartz estimate from \cite{Linares2019} which will be used below.

\begin{lemma}\label{lem:strichartz}
    Let \[W(t)u_0 = \int_{\R}\sum\limits_{\mathbf{k} \in \Z^d} e^{i(\mathbf{k}\cdot \mathbf{x} + ( k_1^3 + k_1k_2^2)t)}\hat{u}_0(\mathbf{k})\] and 
    \begin{align*}
            \widehat{Q^0g(\mathbf{k})}: = \1_{[0, 1)}(|\mathbf{k}|)\hat{g}(\mathbf{k}), \quad &\widehat{Q^jg(\mathbf{k})}:=\1_{[2^{j-1},2^j)}(|\mathbf{k}|)\hat{g}(\mathbf{k})
    \end{align*}
    as before.
     Then, for any $u_0 \in L^2(\T^2)$ and time interval $I \subset \R$ with $|I| \sim 2^{-2j}$ \[\|W(t)Q^j u_0\|_{L^2_IL^\infty} \lesssim 2^{-\frac{j}{3}}\|Q^ju_0\|_{L^2}\]
\end{lemma}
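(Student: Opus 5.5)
The plan is the standard $TT^*$ argument, reduced to a dispersive estimate for a frequency-localized kernel. First rotate coordinates to diagonalize the symbol. With $\mu = 4^{-1/3}$, $\lambda = \sqrt3\,\mu$ and the change of variables $\eta_1 = \mu k_1 + \lambda k_2$, $\eta_2 = \mu k_1 - \lambda k_2$, one has
\begin{align*}
\eta_1^3+\eta_2^3 = 2\mu^3 k_1^3 + 6\mu\lambda^2 k_1k_2^2 = \tfrac12\left(k_1^3 + 9 k_1k_2^2\right).
\end{align*}
This is off by the factor in front of $k_1k_2^2$. So I would instead take $\lambda = \mu/\sqrt3$ with $\mu^3 = 1/2$, which gives exactly $k_1^3 + k_1k_2^2$. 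The image of $\Z^2$ is then a lattice $\Lambda$, and after pulling back to a fundamental domain, $\T^2$ becomes a torus adapted to $\Lambda$. Since $L^\infty$ and $L^2$ norms are invariant under this linear change up to constants, it suffices to bound $e^{it(\partial^3_{y_1}+\partial^3_{y_2})}$-type propagators on frequencies $|\eta|\sim 2^j$.

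Next, by $TT^*$ and duality, the estimate $\|W(t)Q^j u_0\|_{L^2_IL^\infty}\lesssim 2^{-j/3}\|u_0\|_{L^2}$ is equivalent to
\begin{align*}
\Big\|\int_I W(t-t')\tilde Q^j F(t')\,dt'\Big\|_{L^2_IL^\infty}\lesssim 2^{-2j/3}\|F\|_{L^2_IL^1},
\end{align*}
where $\tilde Q^j$ is a smooth version of the projector. (Use a smooth bump $\psi(2^{-j}\mathbf k)$ dominating the sharp cutoff; $Q^j = \tilde Q^j Q^j$ lets us pass back.) Because of Young's inequality in time, this reduces to the kernel bound
\begin{align*}
\sup_{\mathbf x}\Big|\sum_{\mathbf k}\psi(2^{-j}\mathbf k)e^{i(\mathbf k\cdot\mathbf x + (k_1^3+k_1k_2^2)t)}\Big|\lesssim |t|^{-2/3}\qquad\text{for } |t|\lesssim 2^{-j}.
\end{align*}
Granting this, $|t|^{-2/3}$ is integrable in time with $\int_{|t|\lesssim |I|}|t|^{-2/3}dt\sim |I|^{1/3}$. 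By Young/Schur on $I$, the $TT^*$ operator norm is then $\lesssim |I|^{1/3}\sim 2^{-2j/3}$, and taking square roots gives $2^{-j/3}$, as required when $|I|\sim 2^{-2j}$. Actually $|I|\sim 2^{-2j}\le 2^{-j}$, so the short-time range suffices.

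The main obstacle is the kernel bound, which is a discrete sum. On the time scale $|t|\lesssim 2^{-j}$ the phase gradient in $\mathbf k$ varies by $\lesssim |t|2^{2j}\lesssim 2^{j}$ per unit... more precisely, the Hessian of the phase is $O(|t|2^j)\lesssim 1$. So I would apply Poisson summation, which converts the sum into $\sum_{\mathbf m\in\Z^2}\int\psi(2^{-j}\xi)e^{i(\xi\cdot(\mathbf x+2\pi\mathbf m)+\phi(\xi)t)}d\xi$. For $\mathbf m$ with $|\mathbf x+2\pi\mathbf m|\gg |t|2^{2j}$, nonstationary phase with integration by parts gives rapidly decaying, summable terms $\lesssim 2^{2j}(2^j|\mathbf m|)^{-N}$. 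These are bounded by $|t|^{-2/3}$ because $|t|\lesssim 2^{-j}$ gives $|t|^{-2/3}\gtrsim 2^{2j/3}$, but also $2^{2j}\cdot 2^{-jN}$ is small. The $O(1)$ remaining terms, those with $|\mathbf m|\lesssim 1$ since $|t|2^{2j}\lesssim 2^j$, are oscillatory integrals on $\R^2$. In the rotated variables the phase is $\eta_1^3+\eta_2^3$, so they factor into products of one-dimensional Airy-type integrals. Each is bounded by $|t|^{-1/3}$ by van der Corput with third derivative $\gtrsim |t|$, using a smooth product cutoff in the $\eta$ variables to make the factorization exact. This yields $|t|^{-2/3}$. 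I expect the care to lie in choosing the cutoff so that it both factorizes in $\eta$ and dominates $Q^j$, and in checking that the number of nonnegligible Poisson terms is uniformly bounded.
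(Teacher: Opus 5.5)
Your route is the standard argument behind Lemma 2.1 of Linares--Panthee--Robert--Tzvetkov, which the paper does not reprove but simply quotes with $N=2^j$. It consists of $TT^*$ plus a frequency-localized short-time dispersive bound, proved by Poisson summation over $\Z^2$ and factorizing each resulting $\R^2$ oscillatory integral after the linear change $\eta=A\xi$ that turns $k_1^3+k_1k_2^2$ into $\eta_1^3+\eta_2^3$. Your corrected choice $\mu^3=1/2$, $\lambda=\mu/\sqrt3$ is right, as is the Schur bound $|I|^{1/3}$.

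There is one step that is wrong and must be fixed: the time range of the kernel bound. You claim $|K_t|\lesssim|t|^{-2/3}$ for $|t|\lesssim 2^{-j}$, citing ``the Hessian is $O(|t|2^j)\lesssim 1$'' and ``$O(1)$ remaining terms since $|t|2^{2j}\lesssim 2^j$''. Your own Poisson argument contradicts this. The stationary range $|\mathbf x+2\pi\mathbf m|\lesssim |t|2^{2j}$ contains about $1+(|t|2^{2j})^2$ lattice points. For $|t|\sim 2^{-j}$ that is $\sim 2^{2j}$ terms, each of size up to $|t|^{-2/3}$, so you only get $(1+|t|^2 2^{4j})|t|^{-2/3}$. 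The relevant criterion is not the size of the Hessian. It is that $t\nabla\phi$ varies by $\lesssim 1$ over the frequency support, i.e. $|t|\lesssim 2^{-2j}$: wave packets moving at speed $\sim 2^{2j}$ must not wrap around the torus.

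Since $|t-t'|\le |I|\lesssim 2^{-2j}$ inside the $TT^*$ kernel, this restricted range is all you need, and there your argument closes.
\begin{itemize}
\item Only $O(1)$ values of $\mathbf m$ are stationary. Each is bounded by $|\det A|^{-1}$ times two one-dimensional van der Corput bounds $C|t|^{-1/3}$.
\item For the cutoff, take $\psi(\xi)=\chi((A\xi)_1)\chi((A\xi)_2)$ with $\chi\equiv 1$ on $[-\|A\|,\|A\|]$. It factorizes, equals $1$ on the unit ball (so $Q^j=\tilde Q^jQ^j$), and $\|(\chi(2^{-j}\cdot))'\|_{L^1}$ is independent of $j$.
\item For the remaining $\mathbf m$, each integration by parts gains $\lesssim 2^{-j}/|\mathbf m|$, since the amplitude lives at scale $2^j$ and the Hessian is $O(|t|2^j)=O(2^{-j})$. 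This gives a tail $O(2^{(2-K)j})\lesssim |t|^{-2/3}$.
\end{itemize}

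Two cosmetic points. The $TT^*$ kernel carries $\psi^2$ rather than $\psi$, which is also a product. The preliminary reduction to a $\Lambda$-adapted torus is unnecessary once you rotate only inside each Poisson term.
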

Lemma \ref{lem:strichartz} follows from Lemma 2.1 in \cite{Linares2019} by taking $N = 2^j$.  

We now leverage the integration in time with the Strichartz estimate given in Lemma \ref{lem:strichartz} in order to obtain a bound on $\nabla u_N$ in $L^1_tL^\infty(\T^2)$. 
\begin{proposition} \label{lem:lower s}
Let $s > \frac{5}{3}$. Then for $0 < T \leq 1$,
\begin{align}
    \|\nabla \Pi_Nu_N\|_{L^1([0,T];L^\infty_x)}\leq CT^{1/2}\left[\sup_{[0,T]}\|J^s\Pi_Nu_N(t)\|_{L^2_x} + T\sup_{[0,T]}\|J^s\Pi_Nu_N(t)\|_{L^2_x}^3\right].
\end{align}
with $u_N = \Phi_Nu_0$ where $\Phi_N$ is the flow corresponding to \eqref{eq:fin dim}. 
\end{proposition}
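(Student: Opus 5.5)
Following the Koch--Tzvetkov / Ionescu--Kenig strategy of \cite{Kenig2025}, we localize in frequency with the Littlewood--Paley pieces $Q^j$ of Lemma \ref{lem:strichartz} and, for each $j$, chop $[0,T]$ into short intervals on which the Strichartz estimate is applicable. Write $v = \Pi_N u_N$ and $v_j = Q^j v$. Since $\|\nabla v\|_{L^\infty_x}\le \sum_j \|\nabla v_j\|_{L^\infty_x}$, it suffices to bound each $\|\nabla v_j\|_{L^1_TL^\infty_x}$ by $C 2^{-\epsilon j}$ times the right-hand side; the geometric sum over $j$ then gives the claim. Low frequencies ($j\le j_0$) are trivial: Sobolev embedding and Cauchy--Schwarz in time give $T^{1/2}\sup_t\|J^sv\|_{L^2}$ directly.

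For fixed $j\ge1$, partition $[0,T]=\bigcup_k I_k$ with $|I_k|\sim 2^{-2j}$, so there are about $T2^{2j}$ intervals. On each $I_k=[a_k,b_k]$, $v_j$ satisfies the Duhamel formula
\[
v_j(t) = W(t-a_k)v_j(a_k) + 6\int_{a_k}^t W(t-\tau)Q^j\Pi_N\bigl(|v|^2\partial_{x_1}v\bigr)(\tau)\,d\tau .
\]
Here $Q^j$ and $\Pi_N$ commute with $W$, and the $\Pi_{>N}$ part does not feed back. By Cauchy--Schwarz in time, Lemma \ref{lem:strichartz}, and Minkowski for the Duhamel term,
\[
\|\nabla v_j\|_{L^1_{I_k}L^\infty}\lesssim |I_k|^{1/2}2^{j}2^{-j/3}\Bigl(\|v_j(a_k)\|_{L^2} + \int_{I_k}\|Q^j(|v|^2\partial_{x_1}v)\|_{L^2}d\tau\Bigr).
\]
Summing over the $\sim T2^{2j}$ intervals, the linear part contributes
\[
T2^{2j}\cdot 2^{-j}2^{2j/3}\sup_t\|v_j\|_{L^2}\lesssim T\,2^{(5/3-s)j}\sup_t\|J^sv\|_{L^2}.
\]
This sums in $j$ precisely because $s>5/3$, which is where the threshold comes from. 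The factor $T$ is at most $T^{1/2}$ for $T\le1$. An alternative bookkeeping is Cauchy--Schwarz over all of $[0,T]$, giving $T^{1/2}$ directly.

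The nonlinear contribution is $2^{-j/3}\int_0^T\|Q^j(|v|^2\partial_{x_1}v)\|_{L^2}d\tau$, using that the $I_k$ tile $[0,T]$ and $|I_k|^{1/2}2^{j}=O(1)$. We need
\[
\|Q^j(|v|^2\partial_{x_1}v)\|_{L^2}\lesssim 2^{-(s-1)j}\|J^{s-1}(|v|^2\partial_{x_1}v)\|_{L^2}
\]
and then a bound $\|J^{s-1}(|v|^2\partial_{x_1}v)\|_{L^2}\lesssim \|J^sv\|_{L^2}^3$. The latter holds since $H^{s-1}(\T^2)$ is an algebra for $s-1>1$. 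For $s\le 2$ we instead use the fractional Leibniz rule with $\|v\|_{L^\infty}\lesssim\|J^sv\|_{L^2}$, and place $\partial_{x_1}v$ in $H^{s-1}$ while $|v|^2$ goes in $L^\infty\cap W^{s-1,p}$; by Sobolev, $H^s\subset W^{s-1,\infty^-}$ as $s>1$. This yields $2^{-(s-1+1/3)j}\,T\sup\|J^sv\|^3$, which is summable and gives the $T\cdot\sup\|J^s v\|^3$ term; one factor $T^{1/2}$ can be extracted since $T\le T^{1/2}\cdot T^{1/2}$, or absorbed into $C$.

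The main obstacle is the nonlinear estimate in the regime $5/3<s\le 2$: the derivative in the nonlinearity must be absorbed without losing more than $2^{(1/3+s-1)j}$. The plan is to rely on the $2^{-j/3}$ gain together with $H^{s-1}$ control of the product. If the product estimate at the Leibniz step proves delicate, the fallback is a paraproduct split of $Q^j(|v|^2\partial_{x_1}v)$: when the high frequency falls on $\partial_{x_1}v$, use $\|v\|_{L^\infty}^2\|Q^{\sim j}\partial v\|_{L^2}$; when it falls on $|v|^2$, use $\|\partial v\|_{L^4}$ and Sobolev.
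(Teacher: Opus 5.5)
Your architecture (blocks $Q^j$, time pieces of length $\sim 2^{-2j}$, Strichartz plus Duhamel on each piece, with $s>5/3$ entering through the free term) matches the paper's. However, as written it does not give the stated power of $T$ in front of the cubic term.

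Your intervals have length $\sim 2^{-2j}$ independently of $T$. So the prefactor $|I_k|^{1/2}2^{j}2^{-j/3}$ of the Duhamel integral is just $2^{-j/3}$, with no $T$ in it. Summing over $k$ and $j$ you obtain
\[
\|\nabla \Pi_N u_N\|_{L^1([0,T];L^\infty_x)} \lesssim T^{1/2}\sup_{[0,T]}\|J^s\Pi_Nu_N\|_{L^2_x} + T\sup_{[0,T]}\|J^s\Pi_Nu_N\|_{L^2_x}^3 ,
\]
that is, $T^{1/2}\bigl[\,\cdots + T^{1/2}\sup\|J^s\Pi_Nu_N\|_{L^2}^3\bigr]$. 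The proposition claims $T^{1/2}\bigl[\,\cdots + T\sup\|J^s\Pi_Nu_N\|_{L^2}^3\bigr]$. For $T\le 1$ your bound is strictly weaker. Your remark that a factor $T^{1/2}$ ``can be extracted since $T\le T^{1/2}\cdot T^{1/2}$, or absorbed into $C$'' goes in the wrong direction: it would require $T\lesssim T^{3/2}$ uniformly for $T\in(0,1]$, which is false. The weaker bound would still close the bootstrap in Lemma~\ref{lem:local time}, but it is not the stated estimate.

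\textbf{The fix} is the paper's choice of partition. Take exactly $2^{2j}$ intervals of length $T2^{-2j}$; Strichartz on these shorter intervals follows from Lemma~\ref{lem:strichartz} by enlarging the interval. Then $|I_\ell|^{1/2}=T^{1/2}2^{-j}$, and with $F=|\Pi_Nu_N|^2\partial_{x_1}\Pi_Nu_N$:
\begin{itemize}
\item the free part contributes $T^{1/2}2^{2j/3}\sup_t\|Q^j\nabla\Pi_Nu_N\|_{L^2}\lesssim T^{1/2}2^{(5/3-s)j}\sup_t\|J^s\Pi_Nu_N\|_{L^2}$;
\item the Duhamel part contributes $T^{1/2}2^{-j/3}\int_0^T\|Q^jF\|_{L^2}\,d\tau\le T^{3/2}2^{-j/3}\sup_t\|Q^jF\|_{L^2}$.
\end{itemize}
This also removes your miscount of ``$\sim T2^{2j}$ intervals'' when $T<2^{-2j}$.

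\textbf{The nonlinear step is easier than you expect.} Once the gain $2^{-j/3}$ is kept, the step you flag as the main obstacle disappears. For $s>1$,
\[
\|Q^jF\|_{L^2}\le\|\Pi_Nu_N\|_{L^\infty}^2\|\partial_{x_1}\Pi_Nu_N\|_{L^2}\lesssim\|J^s\Pi_Nu_N\|_{L^2}^3 ,
\]
and $\sum_j 2^{-j/3}<\infty$. So no $L^p$ fractional Leibniz rule on $\T^2$ is needed, which is fortunate since the paper does not provide one.

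The paper instead bounds $2^{-j/3}$ by $2^{2j/3}$ and compensates with $\|J^{2/3+2\epsilon}F\|_{L^2}\lesssim\|J^s\Pi_Nu_N\|_{L^2}^3$, via Kato--Ponce and Lemma~\ref{lem:sq}. Your handling of the nonlinearity is the more economical one. The $T$-bookkeeping, however, must be done with $T$-dependent intervals as above.
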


\begin{proof} Let $s > \frac{5}{3}$.  
Since the goal is the estimate $\|\nabla \Pi_Nu_N\|_{L^1([0,T];L^\infty_x)}$ we introduce $v_N: = \nabla \Pi_N u_N$ and $f_N := \Pi_N(|\Pi_Nu_N|^2\partial_{x_1} \Pi_Nu_N)$. Then, thanks to \eqref{eq:fin dim}, we can write the equation for $v_N$ as follows: 
\begin{align}\label{eq:v equation}
    \partial_t v_N + \partial_{x_1}\Delta v_N = \nabla f_N.
\end{align}

In order to estimate $\|v_N\|_{L^1([0,T];L^\infty)}$, let us fix $j \geq 1$ and subdivide the interval $[0,T]$ into $2^{2j}$  subintervals $I_\ell$ with $1 \leq \ell \leq 2^{2j}$ such that $|I_\ell| = 2^{-2j}T$. Applying the triangle inequality, we sum over the time-localized contributions, that is, each subinterval $I_\ell$. Then, we apply H\"older to obtain
\begin{align}
    \|Q^j(v_N)\|_{L^1([0,T];L^\infty)} &\leq \sum\limits_{\ell=1}^{2^{2j}}\|Q^j(v_N)\|_{L^1(I_\ell;L^\infty)}\nonumber\\
    & \leq \sum\limits_{\ell = 1}^{2^{2j}}|I_\ell|^{1/2}\|Q^j(v_N)\|_{L^2(I_\ell,L^\infty)}\notag \\
    &= T^{1/2}2^{-
    j}\sum\limits_{\ell = 1}^{2^{2j}} \|Q^j(v_N)\|_{L^2(I_\ell;L^\infty)}\label{eq:L1 estimate 1}.
\end{align}
Thus, we have shifted our task to estimating $Q^j(v_N)$ in $L^2(I_\ell, L^\infty)$.
 
Let $I_\ell = [a_\ell, a_{\ell + 1}]$. By Duhamel's  formula corresponding to \eqref{eq:v equation}, for $t \in I_\ell$, 
\begin{align}v_N(t) = W(t - a_\ell)(v_N(a_\ell)) + \int_{a_\ell}^t W(t-\tau)\nabla f_N(\tau)d\tau\end{align} so that \begin{align}Q^j(v_N(t)) = W(t-a_\ell)(Q^j(v_N(a_\ell))) + \int_{a_\ell}^tW(t-\tau)\nabla Q^j(f_N(\tau))d\tau.\end{align}

Now we apply the Strichartz estimate from Lemma \ref{lem:strichartz} to obtain
\begin{align}
    \|Q^j(v_N(t))\|_{L^2_{I_\ell}L^\infty} &\leq \|W(t-a_\ell)(Q^j(v_N(a_\ell)))\|_{L^2_{I_\ell}L^\infty} + \left\lvert\left\lvert\int_{a_\ell}^t W(t-\tau)\nabla Q^j(f(\tau))d\tau\right\rvert\right\rvert_{L^2_{I_\ell}L^\infty}\nonumber\\
    &\leq C2^{-\frac{j}{3}} \|Q^jv_N(a_\ell)\|_{L^2} + C 2^{-\frac{j}{3}}2^{j}\int_{a_\ell}^{a_{\ell + 1}}\|Q^jf_N(\tau)\|_{L^2}d\tau.\label{eq:L2 estimate}
\end{align}
Combining \eqref{eq:L1 estimate 1} and \eqref{eq:L2 estimate}, 
\begin{align*}
    \|Q^j(v_N(t))\|_{L^1([0,T]; L^\infty)} &\leq T^{1/2}2^{-j}\sum\limits_{\ell=1}^{2^{2j}}\|Q^j(v_N)\|_{L^2(I_\ell; L^\infty)}\nonumber\\
    & \leq CT^{1/2}2^{-j}\Bigg[\left(\sum\limits_{\ell =1}^{2^{2j}} 2^{-\frac{j}{3}}\|Q^jv_N(a_\ell)\|_{L^2}\right) + \sum\limits_{\ell = 1}^{2^{2j}}2^{\frac{2}{3}j}\int_{a_\ell}^{a_{\ell + 1}}\|Q^jf_N(\tau)\|_{L^2}d\tau\Bigg]\\
    & \leq  CT^{1/2}2^{-j}\Big[2^{-\frac{j}{3}}2^{2j}\sup_{t \in [0,T]}\|Q^jv_N(t)\|_{L^2} + 2^{\frac{2}{3}j}\int_0^T \|Q^jf_N(\tau)\|_{L^2}\,d\tau\Big]\\
    & = CT^{1/2}\Big[2^{\frac{2}{3}j}\sup\limits_{t \in [0,T]}\|Q^j v_N(t)\|_{L^2} + 2^{-\frac{j}{3}}\int_0^T \|Q^j f_N(\tau)\|_{L^2}\,d\tau\Big]
\end{align*}

Since $j \geq 1$, we have 
\begin{align}\label{eq:L1 estimate 2}
     \|Q^j(v_N(t))\|_{L^1([0,T]; L^\infty)} & \leq CT^{1/2}2^{\frac{2}{3}j}\Bigg(\sup\limits_{t \in [0,T]}\|Q^jv_N(t)\|_{L^2} + \int_0^T \|Q^j f_N(\tau)\|_{L^2}\,d\tau \Bigg)
\end{align}
Let $s_0 = \frac{2}{3} + 2\epsilon$. 
It follows from \eqref{eq:L1 estimate 2} that
\begin{align*}
    \|Q^j(v_N)\|_{L^1[0,T],L^\infty)} \leq CT^{1/2}2^{-2\epsilon j}\sup\limits_{t \in[0,1]}\|J^{s_0}(v_N(t))\|_{L^2} + CT^{1/2}2^{-2\epsilon j}\int_0^T\|J^{s_0}(f_N)(\tau)\|_{L^2}d\tau.
\end{align*}
Then, summing in $j$, we have
\begin{align}\label{eq:full L1 estimate}
    \|v_N\|_{L^1([0,T];L^\infty)} \leq CT^{1/2}\sup\limits_{t \in [0,T]}\|J^{s_0}(v_N(t))\|_{L^2} + CT^{1/2}\int_0^T \|J^{s_0}(f_N)(\tau)\|_{L^2}d\tau,
\end{align}
which, since $v_N = \nabla \Pi_Nu_N$, implies  
\begin{align}\label{eq:estimate 1}
    \left\|\nabla \Pi_Nu_N\right\|_{L^1([0,T];L^\infty)} &\leq CT^{1/2}\sup\limits_{t \in [0,T]}\|J^{s_0 + 1}\Pi_Nu_N(t)\|_{L^2}\notag\\
    &\qquad + CT^{1/2}\int_0^T \left\|J^{s_0}\left(\Pi_N|\Pi_Nu_N|^2\partial_{x_1}\Pi_Nu_N\right)(\tau)\right\|_{L^2}d\tau.
\end{align}

Now we apply the Kato-Ponce estimate given in Lemma \ref{lem:KP} given by 
 \begin{align}
     \|J^\sigma(fg) - fJ^\sigma g \|_{L^2} \leq C\|J^\sigma f\|_{L^2}\|g\|_{L^\infty}
 \end{align}
 with $f = \partial_{x_1} \Pi_Nu_N$ and $g = |\Pi_Nu_N|^2$ to obtain
 \begin{align}\label{eq:nonlinearity estimate}
\left\|J^{s_0}\left(|\Pi_Nu_N|^2\partial_{x_1}\Pi_Nu_N\right)(s)\right\|_{L^2} &\leq \left\|\left(\partial_{x_1}\Pi_Nu_N\right)J^{s_0}(|\Pi_Nu_N|^2)\right\|_{L^2} \notag\\
&\qquad \qquad + \left\|\partial_{x_1} J^{s_0}\Pi_Nu_N\right\|_{L^2}\|\Pi_Nu_N\|_{L^\infty}^2.
\end{align}

Estimating the first term on the right side of \eqref{eq:nonlinearity estimate} using Sobolev embedding and Lemma \ref{lem:sq} as well as the fact that $s > 1$,
\begin{align}\label{eq:nonlinear estimate 2}
    \|\left(\partial_{x_1}\Pi_Nu_N\right) J^{s_0}(|\Pi_Nu_N|^2)\|_{L^2} &\leq \|\nabla \Pi_Nu_N\|_{L^2}\|J^{s_0}(|\Pi_Nu_N|^2)\|_{L^\infty}\notag\\
    & \leq\|J^{s}\Pi_N u_N\|_{L^2}\|J^{s_0 + 1 + \epsilon}(|\Pi_Nu_N|^2)\|_{L^2}\notag\\
    & \leq \|J^{s} \Pi_Nu_N\|_{L^2}^3,
\end{align}
which holds for $s \geq s_0 + 1 + \epsilon = \frac{5}{3} + 3\epsilon$, using Lemma \ref{lem:sq}.

Returning to \eqref{eq:nonlinearity estimate}, thanks to  \eqref{eq:nonlinear estimate 2} and recalling that $\|\Pi_Nu_N\|_{L^\infty} \leq \|J^s \Pi_Nu_N\|_{L^2}$,
\begin{align}\label{eq:nonlinear estimate 3}
    \left\|J^{s_0}(|\Pi_Nu_N|^2\partial_{x_1}\Pi_Nu_N)\right\|_{L^2} \lesssim \|J^s \Pi_Nu_N \|_{L^2}^3.
\end{align}
Therefore, combining \eqref{eq:estimate 1} and \eqref{eq:nonlinear estimate 3}, we obtain the desired estimate
\begin{align}
    \|\nabla \Pi_Nu_N\|_{L^1([0,T];L^\infty)} \leq C T^{1/2} \left[\sup\limits_{t \in[0,T]}\|J^s\Pi_Nu_N(t)\|_{L^2} + T\sup\limits_{t \in [0,T]}\|J^s\Pi_Nu_N\|_{L^2}^3\right],
\end{align}
for $s > \frac{5}{3}.$
\end{proof}

\begin{remark}
    We note that in the proof of Lemma \ref{lem:lower s} we did not make use of the truncation of the finite dimensional problem \eqref{eq:fin dim}. Thus, we can use an identical argument for $u$ that solves the mZK equation \eqref{eq:mZK}, to show that 
    \begin{align}\label{eq:L1Linf for mZK}
        \|\nabla u\|_{L^1([0,T];L^\infty_x)} \leq CT^{1/2}\Big[\sup\limits_{[0,T]}\|J^su(t)\|_{L^2_x} + T\sup\limits_{[0,T]}\|J^su(t)\|_{L^2_x}^3\Big].
    \end{align}
\end{remark}

\begin{lemma} \label{lem:local time}
   Let $s > \frac{5}{3}$. Then there exists a $T > 0$ depending on $s$ and $\|J^su_0\|_{L^2}$ such that 
    \begin{align}\label{eq:N apriori bound}
            \sup_{[0,T]}\|J^s\Pi_Nu_N(t)\|_{L^2} \leq 2\|J^s \Pi_Nu_0\|_{L^2}.
    \end{align}
    where $u_N(t) = \Phi_N(t)u_0$ and $\Phi_N$ is the flow corresponding to \eqref{eq:fin dim}.
\end{lemma}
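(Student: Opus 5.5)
Integrating the energy inequality of Lemma \ref{lem:energy bound} in time and feeding in the Strichartz-based bound of Proposition \ref{lem:lower s} gives a closed inequality for
\[
X(T) := \sup_{[0,T]}\|J^s\Pi_Nu_N(t)\|_{L^2},
\]
and a continuity (bootstrap) argument then yields \eqref{eq:N apriori bound}. Write $A := \|J^s\Pi_N u_0\|_{L^2}\le \|J^su_0\|_{L^2}$, which is what will make $T$ independent of $N$.

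First, by Remark \ref{rem:ODE wellposed}, $\Pi_N u_N$ solves a finite system of ODEs, so $t\mapsto \|J^s\Pi_Nu_N(t)\|_{L^2}$ is continuous. Hence $X(T)$ is continuous and nondecreasing in $T$, with $X(0)=A$ (if $A=0$ the solution is zero and there is nothing to prove). Integrating \eqref{eq:lem3.3} from $0$ to $t\le T$ and taking the supremum gives
\begin{align*}
X(T)^2 \le A^2 + C X(T)^3\|\nabla\Pi_Nu_N\|_{L^1([0,T];L^\infty)} + C T X(T)^4 .
\end{align*}
Proposition \ref{lem:lower s} with $0<T\le 1$ bounds the $L^1_TL^\infty$ norm by $CT^{1/2}\left(X(T)+TX(T)^3\right)$. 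Therefore
\begin{align*}
X(T)^2 \le A^2 + C T^{1/2}X(T)^4 + CT^{3/2}X(T)^6 + CTX(T)^4 .
\end{align*}

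Next comes the bootstrap. Let $T^*$ be the supremum of those $T'\le 1$ with $X(T')\le 2A$ on $[0,T']$. Since $X(0)=A<2A$ and $X$ is continuous, $T^*>0$. On $[0,T]$ with $T<T^*$, the inequality above gives
\begin{align*}
X(T)^2 \le A^2 + C\left(T^{1/2}16A^4 + T^{3/2}64A^6 + T\,16A^4\right).
\end{align*}
Now choose $T_0\le 1$, depending only on $s$ (through the constants $C$) and on $\|J^su_0\|_{L^2}$, so small that
\[
C\left(16T_0^{1/2}A^2 + 64T_0^{3/2}A^4 + 16T_0A^2\right)\le 2.
\]
This is possible with $A$ replaced by the larger quantity $\|J^su_0\|_{L^2}$, so $T_0$ is uniform in $N$. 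With this choice, $X(T)^2\le 3A^2<4A^2$ for $T\le\min(T_0,T^*)$. By continuity of $X$, the strict inequality $X<2A$ persists slightly beyond $T^*$, contradicting the definition of $T^*$ unless $T^*\ge T_0$. Hence $X(T_0)\le 2A$, which is \eqref{eq:N apriori bound} with $T=T_0$.

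The only step needing care is making sure the argument is legitimate and uniform in $N$. Continuity of $X$ comes from the finite-dimensional ODE theory. Proposition \ref{lem:lower s} applies for every $T\le1$, so it may be used inside the bootstrap interval. The constants in Lemma \ref{lem:energy bound} and Proposition \ref{lem:lower s} are independent of $N$. Once these points are checked, the rest is the routine absorption above. Negative times are handled identically, by time reversal.
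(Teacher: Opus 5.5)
Your proof is correct and is essentially the paper's argument: you integrate the energy inequality of Lemma \ref{lem:energy bound}, insert the $L^1_TL^\infty$ bound of Proposition \ref{lem:lower s}, and close by continuity, which the paper phrases as a contradiction at the first time $\widetilde{T}$ where $\|J^s\Pi_Nu_N(\widetilde{T})\|_{L^2}^2=2\|J^s\Pi_Nu_0\|_{L^2}^2$. Your version is slightly more careful than the paper's, since it makes explicit the restriction $T\le 1$ needed for Proposition \ref{lem:lower s} and the continuity of $t\mapsto\|J^s\Pi_Nu_N(t)\|_{L^2}$ coming from the finite-dimensional ODE.
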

\begin{proof}
First, we note that for $\|J^s\Pi_N u_0\|_{L^2} = 0$, by the uniqueness of the finite dimensional problem we have $\|J^s\Pi_N u_N(t)\|_{L^2} = 0$ and the inequality \eqref{eq:N apriori bound} is trivial. Now we assume $\|J^s\Pi_N u_0\|_{L^2} > 0$.

 Define $Y(t) = \sup\limits_{\tau \in [0,t]}\|J^s\Pi_Nu_N(\tau)\|_{L^2}^2$. Fix $T > 0$. We consider the two cases: 

    \textbf{Case 1:} ($\bm{Y(T) < 2\|J^s \Pi_Nu_0\|_{L^2}^2}$) The bound \eqref{eq:N apriori bound} is obtained.

    \textbf{Case 2:} ($\bm{Y(T) \geq 2\|J^s \Pi_Nu_0\|_{L^2}^2}$) Let $0 < \widetilde{T} \leq T$ be the first time that \[\|J^s\Pi_Nu_N(\widetilde{T})\|_{L^2}^2 = 2\|J^s \Pi_Nu_0\|_{L^2}^2.\] We will derive a contradiction for well-chosen $T$ using Lemma \ref{lem:energy bound} and Proposition \ref{lem:lower s}.

    Integrating \eqref{eq:lem3.3} on the interval $[0,\widetilde{T}]$, we have 
    \begin{align}
        \|J^s\Pi_Nu_N(\widetilde{T})\|_{L^2}^2 - \|J^s\Pi_Nu_0\|_{L^2}^2 &\leq C\int_0^{\widetilde{T}}\Big(\|J^s\Pi_Nu_N\|^3_{L^2}\|\|\nabla \Pi_Nu_N\|_{L^\infty} + \|J^s \Pi_Nu_N\|_{L^2}^4\Big)dt\notag\\
        & \leq C[Y(\widetilde{T})]^{3/2}\|\nabla \Pi_Nu_N\|_{L^1([0,\widetilde{T}];L_x^\infty)} + \widetilde{T}[Y(\widetilde{T})]^2.
    \end{align}
    Then, thanks for Proposition \ref{lem:lower s}, 
    \begin{align}
    \|J^s\Pi_Nu_N(\Tilde{T})\|_{L^2}^2 - \|J^s\Pi_Nu_0\|_{L^2}^2
        & \leq C\widetilde{T}^{1/2}[Y(\widetilde{T})]^{3/2}\Big[[Y(\widetilde{T})]^{1/2} + \widetilde{T}[Y(\widetilde{T})]^{3/2}\Big] + \widetilde{T}[Y(\widetilde{T})]^2\notag\\
        & \leq C\widetilde{T}^{1/2}[Y(\widetilde{T})]^{2} + C\widetilde{T}^{3/2}[Y(\widetilde{T})]^{3} + C\widetilde{T}[Y(\widetilde{T})]^2
    \end{align}
    By assumption, $Y(\widetilde{T}) = 2\|J^s\Pi_Nu_0\|_{L^2}^2$ and $0 \leq \widetilde{T} \leq T$, so 
    
    \begin{align}
        \|J^s\Pi_Nu_0\|_{L^2}^2 \leq 4CT^{1/2}\|J^s \Pi_Nu_0\|_{L^2}^4 + 8CT^{3/2}\|J^s\Pi_N u_0\|_{L^2}^6 + 4CT\|J^s\Pi_Nu_0\|_{L^2}^4.
    \end{align}

    Dividing by $\|J^s \Pi_Nu_0\|_{L^2}^2$ (recall $\|J^s \Pi_N u_0\|_{L^2} > 0)$, we have 
    \begin{align}\label{eq:get contradiction}
        1 &\leq 4CT^{1/2}\|J^s \Pi_Nu_0\|_{L^2}^2 + 8CT^{3/2}\|J^s \Pi_Nu_0\|_{L^2}^4 + 4CT\|J^s \Pi_Nu_0\|_{L^2}^2\notag\\
        & \leq 4CT^{1/2}\|J^s u_0\|_{L^2}^2 + 8CT^{3/2}\|J^s u_0\|_{L^2}^4 + 4CT\|J^s u_0\|_{L^2}^2.
    \end{align}
where we have used that $\|J^s \Pi_N u_0\|_{L^2} \leq \|J^s u_0\|_{L^2}$ to obtain \eqref{eq:get contradiction}. 

    We choose $T$ small enough in terms of $\|J^s u_0\|_{L^2}$ such that the right hand side of \eqref{eq:get contradiction} is smaller than $1/2$ and we have obtained a contradiction. 
\end{proof}

We obtain the following as an immediate corollary of the previous lemma.
\begin{corollary} \label{cor:local time}
   Let $s > \frac{5}{3}$. Then there exists a $T > 0$ depending on $s$ and $\|J^su_0\|_{L^2}$ such that 
    \begin{align}\label{eq:apriori bound}
            \sup_{[0,T]}\|J^s\Pi_Nu_N(t)\|_{L^2} < 2\|J^s u_0\|_{L^2}.
    \end{align}
    where $u_N(t) = \Phi_N(t)u_0$ and $\Phi_N$ is the flow corresponding to \eqref{eq:fin dim}.
\end{corollary}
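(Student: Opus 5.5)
The corollary follows from Lemma \ref{lem:local time} together with the contractivity of $\Pi_N$ on $H^s$; the only extra work is the strict inequality. I would take $T$ to be the time supplied by Lemma \ref{lem:local time}, which depends only on $s$ and $\|J^s u_0\|_{L^2}$ and not on $N$. That lemma gives
\begin{align*}
\sup_{[0,T]}\|J^s\Pi_N u_N(t)\|_{L^2} \leq 2\|J^s\Pi_N u_0\|_{L^2}.
\end{align*}
Since $\Pi_N$ is a Fourier multiplier by an indicator function, Plancherel gives
\begin{align*}
\|J^s\Pi_N u_0\|_{L^2}^2=\sum_{|\mathbf{k}|\le N}(1+|\mathbf{k}|^2)^s|\hat u_0(\mathbf{k})|^2\le \|J^s u_0\|_{L^2}^2.
\end{align*}
Chaining the two inequalities yields the bound with $\leq$ in place of $<$.

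The strict inequality needs a little care, and I would handle it as follows. If $u_0=0$, then $u_N\equiv 0$, the statement degenerates to $0<0$, and one simply reads it with $\le$. Otherwise, I would look inside the proof of Lemma \ref{lem:local time}: with $T$ chosen so that the right side of \eqref{eq:get contradiction} is at most $1/2$, Case 2 is impossible. Hence $\|J^s\Pi_N u_N(t)\|_{L^2}^2<2\|J^s\Pi_Nu_0\|_{L^2}^2$ for all $t\in[0,T]$. By continuity of $t\mapsto\|J^s\Pi_Nu_N(t)\|_{L^2}$ on the compact interval $[0,T]$, the supremum is attained, so it is strictly less than $\sqrt2\,\|J^s\Pi_N u_0\|_{L^2}$. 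This is in turn at most $\sqrt2\|J^s u_0\|_{L^2}<2\|J^s u_0\|_{L^2}$.

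There is no real obstacle beyond this point: uniformity of $T$ in $N$ is already built into Lemma \ref{lem:local time}, and the rest is the contractivity of $\Pi_N$ just shown.
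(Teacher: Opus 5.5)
Your argument is correct and follows the paper's route. The paper simply calls this an immediate consequence of Lemma~\ref{lem:local time} together with $\|J^s\Pi_N u_0\|_{L^2}\le\|J^s u_0\|_{L^2}$. Your extra observation fills a point the paper glosses over: the lemma's proof actually gives $\sup_{[0,T]}\|J^s\Pi_Nu_N\|_{L^2}\le\sqrt2\,\|J^s\Pi_Nu_0\|_{L^2}$, and this is what makes the strict inequality true when $u_0\neq0$ (it is false for $u_0=0$).

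The compactness and attained-supremum step is unnecessary, since $\sqrt2\,\|J^s u_0\|_{L^2}<2\|J^s u_0\|_{L^2}$ already supplies the strictness. Its strict intermediate bound also fails when $\Pi_Nu_0=0\neq u_0$.
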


\subsection{Towards the proof of Theorem \ref{thm:lwp} and the proof itself}

We wish to pass to the limit of the finite dimensional problem. To do this we will need the following lemma. We are motivated by the work \cite{Kenig2025}.

\begin{lemma}\label{lem: towards existence}
Let $\{u_{N,0}\}$ be a sequence in $H^s$, with $s > \frac{5}{3}$, such that 
\begin{align}
    u_{N,0} \overset{N\to \infty}{\longrightarrow} u_0\text{ in $L^2$ and }\sup_{N}\|u_{N,0}\|_{H^s} = S < \infty.
\end{align}
Then the sequence $\{\Phi_N(t)u_{N,0}\} \subset C([-T,T],H^s)$ of solutions to \eqref{eq:fin dim} in the sense of Definition \ref{def:strong solution} with initial conditions $\{u_{N,0}\}$ is a Cauchy sequence in $C([-T,T], L^2)$. 
\end{lemma}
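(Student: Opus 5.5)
Set $u_N(t)=\Phi_N(t)u_{N,0}$. For $M\ge N$ we will estimate $w=u_M-u_N$ in $L^2$ through an energy identity, and control the high-frequency part by the uniform $H^s$ bounds.

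\textbf{Uniform bounds.} By Corollary \ref{cor:local time}, applied with $u_{N,0}$ in place of $u_0$, there is a single $T=T(s,S)>0$ such that
\[
\sup_{[-T,T]}\|J^s\Pi_N u_N\|_{L^2}\le 2S
\]
for every $N$. The negative time direction is handled by the symmetry $t\mapsto -t$, $x\mapsto -x$. Since the part $\Pi_{>N}u_N$ evolves by the linear unitary flow, we also get $\sup_{[-T,T]}\|u_N\|_{H^s}\lesssim S$. Proposition \ref{lem:lower s} then gives $\|\nabla\Pi_N u_N\|_{L^1_TL^\infty}\le C(S)$. Sobolev embedding ($s>1$) gives $\|\Pi_Nu_N\|_{L^\infty}\lesssim S$.

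\textbf{Energy identity for the difference.} We have
\[
\partial_t w+\partial_{x_1}\Delta w=6\Pi_M(|\Pi_Mu_M|^2\partial_{x_1}\Pi_Mu_M)-6\Pi_N(|\Pi_Nu_N|^2\partial_{x_1}\Pi_Nu_N).
\]
We write the right side as
\[
6\Pi_M\big(|\Pi_Mu_M|^2\partial_{x_1}\Pi_Mu_M-|\Pi_Nu_N|^2\partial_{x_1}\Pi_Nu_N\big)+6(\Pi_M-\Pi_N)\big(|\Pi_Nu_N|^2\partial_{x_1}\Pi_Nu_N\big).
\]
The second term has frequencies $>N$, and the product lies in $H^{s-1}$ uniformly. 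Its $L^2$ norm is therefore $\lesssim N^{-(s-1)}C(S)$, so it contributes $O(N^{1-s})\|w\|_{L^2}$ to $\frac{d}{dt}\|w\|_{L^2}^2$.

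For the first term, set $a=\Pi_Mu_M$ and $b=\Pi_Nu_N$. We have $a-b=\Pi_Mw+(\Pi_M-\Pi_N)u_N$, and $\|(\Pi_M-\Pi_N)u_N\|_{H^1}\lesssim N^{1-s}S$. We decompose
\[
|a|^2\partial_{x_1}a-|b|^2\partial_{x_1}b=|a|^2\partial_{x_1}(a-b)+(|a|^2-|b|^2)\partial_{x_1}b .
\]
\begin{itemize}
\item The piece $(|a|^2-|b|^2)\partial_{x_1}b$ is bounded in $L^2$ by $\|a-b\|_{L^2}\,C(S)\,\|\nabla b\|_{L^\infty}$.
\item In the piece $|a|^2\partial_{x_1}(a-b)$, the part $|a|^2\partial_{x_1}(\Pi_M-\Pi_N)u_N$ is $O(N^{1-s})$ in $L^2$.
\item The remaining part is paired with $\overline w$. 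We move $\Pi_M$ onto $\overline w$ and use $\Pi_Mw$ in the pairing. Then $\mathrm{Re}\int |a|^2\partial_{x_1}\Pi_Mw\,\overline{\Pi_Mw}=-\tfrac12\int\partial_{x_1}|a|^2\,|\Pi_Mw|^2$, which is bounded by $\|a\|_{L^\infty}\|\nabla a\|_{L^\infty}\|w\|_{L^2}^2$.
\end{itemize}
This integration by parts is the key step: it avoids losing a derivative, and it is where the complex-valued structure must be handled by taking real parts carefully. The dispersive term drops out because the linear flow is skew-adjoint.

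Collecting the terms gives
\[
\frac{d}{dt}\|w\|_{L^2}^2\le C(S)\big(1+\|\nabla\Pi_Mu_M\|_{L^\infty}+\|\nabla\Pi_Nu_N\|_{L^\infty}\big)\big(\|w\|_{L^2}^2+N^{1-s}\|w\|_{L^2}+N^{2-2s}\big).
\]
By Gronwall and the $L^1_TL^\infty$ bound from Proposition \ref{lem:lower s},
\[
\sup_{[-T,T]}\|w\|_{L^2}^2\le e^{C(S)}\big(\|u_{M,0}-u_{N,0}\|_{L^2}^2+N^{2-2s}\big)\to0 .
\]
Hence the sequence is Cauchy in $C([-T,T],L^2)$.

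\textbf{Main obstacle.} The hardest part is the careful bookkeeping of the projections. Commuting $\Pi_M$ and $\Pi_N$ past the products must produce only tail terms that decay in $N$ by the $H^s$ bound, and the only top-order term must be exactly the one removable by integration by parts. The time integrability needed for Gronwall is exactly the $L^1_tL^\infty_x$ gradient bound supplied by Proposition \ref{lem:lower s}.
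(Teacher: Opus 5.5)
Your proof is correct. It takes a different route from the paper's, and the difference matters.

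\textbf{How the paper argues.} The paper first peels off the linear tail $\Pi_{>N}W(t)u_{N,0}$. For $w_N=\Pi_N\Phi_Nu_{N,0}$ it then bounds $\frac{d}{dt}\|w_N-w_M\|_{L^2}^2$ by H\"older, placing each full nonlinearity $|w_N|^2\partial_{x_1}w_N$ in $L^2$. It finishes with short-time absorption and iteration. In that version the smallness comes from a factor $N^{-s}$ in \eqref{eq:cauchy time estimate} that does not follow from the preceding line. Indeed, $w_N-w_M$ contains the low modes $\Pi_N(u_N-u_M)$, so it is not supported in $|n|>N$. Without that factor, the crude bound only gives $\frac{d}{dt}\|w_N-w_M\|_{L^2}^2\le C(S)\|w_N-w_M\|_{L^2}$, which does not yield a Cauchy sequence. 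Any direct $L^2$ estimate of $|w_N|^2\partial_{x_1}w_N-|w_M|^2\partial_{x_1}w_M$ also loses a derivative on the difference.

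\textbf{What your route does instead.} You isolate the single top-order term $|a|^2\partial_{x_1}\Pi_Mw$ and remove it by integrating by parts. This is the same device the paper uses in its uniqueness proof. Every other contribution is then one of two kinds:
\begin{enumerate}
\item $\|w\|_{L^2}^2$ times a coefficient that is integrable in time by Proposition~\ref{lem:lower s};
\item a genuine projection tail of size $N^{1-s}$, controlled by the uniform $H^s$ bound.
\end{enumerate}
Gronwall then closes on all of $[-T,T]$ at once, with no iteration. This is also where the $L^1_TL^\infty_x$ gradient bound is actually used; the paper cites it in this proof, but it plays no role there.

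\textbf{Two details worth writing out.}
\begin{enumerate}
\item The tail bound $\|(\Pi_M-\Pi_N)(|b|^2\partial_{x_1}b)\|_{L^2}\lesssim N^{1-s}S^3$ rests on the product estimate $\|J^{s-1}(fg)\|_{L^2}\lesssim\|J^sf\|_{L^2}\|J^{s-1}g\|_{L^2}$ for $s>1$. This follows as in Lemma~\ref{lem:sq}, by splitting according to which frequency dominates and using $\langle m\rangle^{s-1}\lesssim\langle k\rangle^{s}\langle m-k\rangle^{-1}$ when $\langle k\rangle\ge\langle m-k\rangle$.
\item Dropping the dispersive term does not require $w\in H^3$. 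Since $M\ge N$, $\Pi_{>M}w=W(t)\Pi_{>M}(u_{M,0}-u_{N,0})$ has constant $L^2$ norm. Also, $\Pi_Mw$ lives in a finite-dimensional space. So you may differentiate $\|\Pi_Mw\|_{L^2}^2$ directly.
\end{enumerate}
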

\begin{proof}
    From the definition of the flow associated to \eqref{eq:fin dim} we have the following splitting: 
    \begin{align*}
        \Phi_Nu_{N,0} = \Pi_N\Phi_Nu_{N,0} + \Pi_{> N}W(t)u_{N,0},
    \end{align*}
    where $W(t)$ is the propagator associated with the linear equation given in \eqref{eq:W}. 
    
    Assume $N \leq M$. Then we have
    \begin{align}
        \|\Pi_{> M}W(t)u_{M,0} - \Pi_{> N}W(t)u_{N,0}\|_{L^2} &\leq \|\Pi_{>M}W(t)u_{M,0} - \Pi_{>M}W(t)u_{N,0}\|_{L^2} \notag\\
        &\qquad + \|\Pi_{>M}W(t)u_{N,0} - \Pi_{>N}W(t)u_{N,0}\|_{L^2}\notag\\
        & \leq \|u_{M,0} - u_{N,0}\|_{L^2} + 2\|\Pi_{> N}u_{N,0}\|_{L^2}\notag\\
        & \leq \|u_{M,0} - u_{N,0}\|_{L^2} + 2N^{-s}\|\Pi_{> N}u_{N,0}\|_{H^s}.
    \end{align}
    Since $u_{N,0} \to u_0$ in $L^2$ and $\|u_{N,0}\|_{H^s} < S$ is uniformly bounded in $N$, we have that $\{\Pi_{>N}W(t)u_{N,0}\}$ is a Cauchy sequence in $C([-T,T], L^2)$. Thus, it is sufficient to show that \begin{align}w_N(t) := \Pi_N\Phi_Nu_{N,0}\end{align} is a Cauchy sequence in $C([-T,T], L^2)$. First, note that $w_N(t)$ solves \eqref{eq:fin dim} with initial data $\Pi_Nu_{N,0}$. So, $w_N - w_M$ solves 
    \begin{align}
        (\partial_t + \partial_{x_1}\Delta)(w_N - w_M) = \Pi_N(|w_N|^2\partial_{x_1}w_N) - \Pi_M(|w_M|^2\partial_{x_1}w_M).
    \end{align}

    Using this equation 
    we can express $\frac{d}{dt}\|w_N - w_M\|_{L^2}^2$ as follows
    \begin{align}\label{eq:L2 diff}
        \frac{d}{dt}\|w_N - w_M\|_{L^2}^2 &= 12Re \int \Big[\Pi_N(|w_N|^2\partial_{x_1}w_N) - \Pi_M(|w_M|^2\partial_{x_1}w_M)\Big]\overline{(w_N - w_M)}
    \end{align}

By first applying H\"older's inequalty and then the Sobolev embedding $H^s \subset L^\infty$ and $N \leq M$, we have 
\begin{align}\label{eq:cauchy time estimate}
    \frac{d}{dt}\|w_N - w_M\|_{L^2}^2 &\leq C\big\|\Pi_N(|w_N|^2\partial_{x_1}w_N) - \Pi_M(|w_M|^2\partial_{x_1}w_M)\|_{L^2}\|w_N - w_N\|_{L^2}\notag\\
    & \leq C\Big(\||w_N|^2\partial_{x_1}w_N\|_{L^2} + \||w_M|^2\partial_{x_1}w_M\|_{L^2}\Big)\|w_N - w_M\|_{L^2}\notag\\
    & \leq C\Big(\|w_N\|_{L^\infty}^2\|\partial_{x_1}w_N\|_{L^2} + \|w_M\|_{L^\infty}^2\|\partial_{x_1}w_M\|_{L^2}\Big)N^{-s}\|w_N - w_M\|_{L^2}\notag\\
    & \leq C\Big(\|w_N\|_{H^s}^3 + \|w_M\|_{H^s}^3\Big)N^{-s}(\|w_N\|_{H^s} + \|w_M\|_{H^s})
\end{align}

Let $\tau > 0$. Returning to \eqref{eq:cauchy time estimate} and integrating  $\frac{d}{dt}\|w_N(t) - w_M(t)\|_{L^2}^2$ on the interval $[0,\tau]$ we have 
\begin{align}\label{eq:towards cauchy}
    &\sup\limits_{t \in [0,\tau]}\|w_N(t) - w_M(t)\|_{L^2}^2 \notag\\
     &\leq \|w_N(0) - w_M(0)\|_{L^2}^2 + C\tau\sup_{t \in [0,\tau]}(\|w_N(t)\|_{H^s}^3 + \|w_M\|_{H^s}^3)(\|w_N(t)\|_{H^s} + \|w_M(t)\|_{H^s})N^{-s}\nonumber\\
\end{align}

Since $w_N$ solves \eqref{eq:fin dim}, Lemma \ref{lem:local time} implies \[\|w_N(t)\|_{H^s}, \|w_N(t)\|_{H^s} < 2S \quad \forall \, t \in [0,\tau],\] which together with Lemma \ref{lem:lower s} and \eqref{eq:towards cauchy} gives
\begin{align}
    \sup\limits_{t \in [0,\tau]}\|w_N(t) - w_M(t)\|_{L^2}^2 & \leq \|w_N(0) - w_M(0)\|_{L^2}^2 +  C\tau S^4N^{-s}\nonumber 
\end{align}
We choose $\tau$ small enough, that is $\tau \leq \min\left(\left(\frac{1}{4CS^2}\right)^2, \left(\frac{1}{4CS^4}\right)^{2/3}\right)$, so that the last term on the right hand side can be absorbed on the left hand side. Then we have
\begin{align}\label{eq:cauchy}
    \sup\limits_{t \in [0,\tau]}\|w_N(t) - w_M(t)\|_{L^2}^2 & \leq C\|w_N(0)- w_M(0)\|_{L^2}^2 + C\tau S^4N^{-s}.
\end{align}
Since $u_{N,0} \to u_0$ in $L^2$, we get that $w_N(0) = \Pi_Nu_{N,0}$ is a Cauchy sequence in $L^2$. Therefore, from \eqref{eq:cauchy} we obtain that $w_N$ is a Cauchy sequence in $C([0,\tau],L^2)$. In particular $w_N(\tau)$ converges in $L^2$. Thus, we can iterate this argument above on the interval $[\tau,2\tau]$ to get that $w_N$ is a Cauchy sequence on $C([\tau,2\tau],L^2)$. Repeating this argument $\left[\frac{T}{\tau}\right] + 1$ times, we obtain that $w_N$ is a Cauchy sequence in $C([0,T],L^2)$. By a similar argument, we get that $w_N$ is a Cauchy sequence in $C([-T,0],L^2)$ and we obtain the result.

For the proof of continuity in time see \cite{Cazenave2003}.
\end{proof}

With this lemma in hand, we can prove the local existence of a strong solution!

\begin{proposition}[Existence]\label{prop:existence}
    Let $s > \frac{5}{3}$. Then for every $u_0 \in H^s$ there exists a strong solution $u \in C([-T,T], H^s)$ to \eqref{eq:mZK} where $T$ is given in Lemma \ref{lem:local time}. 
\end{proposition}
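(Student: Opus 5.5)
We apply Lemma \ref{lem: towards existence} with the constant sequence $u_{N,0} = u_0$. Its hypotheses hold trivially with $S = \|u_0\|_{H^s}$. So $u_N(t) = \Phi_N(t)u_0$ is Cauchy in $C([-T,T],L^2)$, where $T$ is the time from Lemma \ref{lem:local time} (equivalently Corollary \ref{cor:local time}). Let $u \in C([-T,T],L^2)$ be the limit.

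By Corollary \ref{cor:local time}, together with $\|\Pi_{>N}u_N(t)\|_{H^s} = \|\Pi_{>N}u_0\|_{H^s}$ (linear flow on high frequencies), we have $\sup_N\sup_{|t|\le T}\|u_N(t)\|_{H^s} \le 3\|u_0\|_{H^s}$. For each fixed $t$, $u_N(t)$ therefore has weakly convergent subsequences in $H^s$, and every weak limit must equal the $L^2$ limit $u(t)$. Hence $u(t)\in H^s$ with $\|u(t)\|_{H^s}\le 3\|u_0\|_{H^s}$, and $u_N(t)\rightharpoonup u(t)$ weakly in $H^s$. Interpolating between the $L^2$ convergence and the uniform $H^s$ bound gives $u_N\to u$ in $C([-T,T],H^{s'})$ for every $s'<s$. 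We fix $s'$ with $\tfrac{5}{3}<s'<s$ (any $s'>1+\epsilon$ suffices here).

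Next we pass to the limit in the Duhamel formula. For $u_N$,
\[
u_N(t) = W(t)u_0 + 6\int_0^t W(t-\tau)\Pi_N\big(|\Pi_Nu_N|^2\partial_{x_1}\Pi_Nu_N\big)(\tau)\,d\tau .
\]
Since $H^{s'}\subset L^\infty$ and $\partial_{x_1}$ maps $H^{s'}\to H^{s'-1}\subset L^2$, the nonlinearity $|\Pi_Nu_N|^2\partial_{x_1}\Pi_Nu_N$ converges to $|u|^2\partial_{x_1}u$ in $C([-T,T],L^2)$. We also use $\Pi_N u_N - u = \Pi_N(u_N-u) - \Pi_{>N}u$, and $\Pi_{>N}u\to0$ in $H^{s'}$ uniformly in $t$ by compactness of $\{u(t)\}$ in $H^{s'}$. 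The outer $\Pi_N$ converges strongly to the identity on $L^2$, and $W$ is unitary on $L^2$. The integral term therefore converges in $C([-T,T],L^2)$, and $u$ satisfies the Duhamel formula of Definition \ref{def:strong solution}.

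The main obstacle is continuity into the strong $H^s$ topology, since so far we only have $u\in L^\infty H^s\cap C(H^{s'})$. Weak continuity in $H^s$ follows from the uniform bound and strong $L^2$ continuity, via a density argument on test functions. For strong continuity at $t=0$ (and likewise at any $t_0$, using the time-translation and time-reversal symmetries of \eqref{eq:mZK}), it suffices to show $\limsup_{t\to0}\|u(t)\|_{H^s}\le\|u_0\|_{H^s}$.

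To get this, we integrate Lemma \ref{lem:energy bound} and use Proposition \ref{lem:lower s} together with Corollary \ref{cor:local time}. This yields, uniformly in $N$,
\[
\|J^su_N(t)\|_{L^2}^2 \le \|J^su_0\|_{L^2}^2 + C(\|u_0\|_{H^s})\,|t|^{1/2}.
\]
Weak lower semicontinuity then gives $\|J^su(t)\|_{L^2}^2\le\|J^su_0\|_{L^2}^2+C|t|^{1/2}$. Combined with weak continuity, this gives strong continuity at $t=0$.

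To apply this at a general $t_0$, we restart the truncated flow from $u(t_0)$, which has $H^s$ norm at most $3\|u_0\|_{H^s}$. If restarting causes a problem with the size of $T$, we use instead a Bona–Smith-type regularization: apply Lemma \ref{lem: towards existence} with $u_{N,0}=\Pi_{N}u_0$ and compare against the smooth solutions. We expect the energy-inequality route to suffice, giving $u\in C([-T,T],H^s)$.
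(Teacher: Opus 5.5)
Your proposal is correct and follows essentially the same route as the paper: the uniform $H^s$ bound from Corollary \ref{cor:local time} together with the linear flow on high frequencies, the $L^2$ limit from Lemma \ref{lem: towards existence}, interpolation to $H^{s'}$, and passage to the limit in the Duhamel formula, followed by the paper's three claims (weak continuity; the integrated energy inequality plus weak lower semicontinuity at $t=0$; restarting the truncated flow from $u(t_0)$). At the restart step, the issue is not the size of $T$, since a neighborhood of $t_0$ suffices; what you need is to identify the restarted limit with $u$ near $t_0$, which (implicitly in the paper as well) follows from the uniqueness result, whose proof does not use existence.
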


\begin{proof}
Let $T$ be given by Lemma \ref{lem:local time}. 
    Consider the sequence ${u_N}$ of solutions to \eqref{eq:fin dim} with initial conditions $u_0$ independent of $N$. By Corollary \ref{cor:local time} we know that
    \begin{align}
        \sup_{t \in [0,T]}\|\Pi_Nu_N(t)\|_{H^s} \leq 2\|u_0\|_{H^{s}}. 
    \end{align}

    Moreover, since  $\frac{d}{dt}\|J^s\Pi_{>N}u_N\|_{L^2} = 0$, we have that
    \begin{align*}
        \|\Pi_{>N} u_N\|_{H^s} = \|\Pi_{>N}u_0\|_{H^s} \leq \|u_0\|_{H^s}. 
    \end{align*} 

    Thus,
    \begin{align}\label{eq:unif bound}
        \sup_{t \in [0,T]}\|u_N(t)\|_{H^s} \leq 3\|u_0\|_{H^s}.
    \end{align}
    Then, by Lemma \ref{lem: towards existence}, we have that there exists a $u \in C([-T,T], L^2)$ such that $u_N \to u$ in $C([-T,T],L^2)$. Combining the bound \eqref{eq:unif bound} with this convergence result using an interpolation argument, we actually have that 
    \begin{align}\label{conv in s'}
        u_N \to u \text{ in } C([-T,T], H^{s'})~\forall~0 \leq s' < s. 
    \end{align}
    Pick $s'$ such that $\frac{5}{3} < s' < s$. The integral equation of the truncated problem \eqref{eq:fin dim} is written as  
    \begin{align}\label{eq:fin int eq}
        u_N(t) = W(t)u_0 + \int_0^t W(t - \tau)\Pi_N(|\Pi_N u_N|^2\partial_{x_1}\Pi_N u_N(\tau))\,d\tau.
    \end{align}
    We claim that  
    \begin{align}\label{eq:nonlin convergence}
        \Pi_N(|\Pi_N u_N|^2\partial_{x_1}\Pi_N u_N) \to |u|^2\partial_{x_1}u \text{ in }C([-T,T], L^2).
    \end{align}
    In order to prove \eqref{eq:nonlin convergence}, we proceed as follows. 
    \begin{align*}
        \|\Pi_N(|\Pi_N u_N|^2\partial_{x_1}\Pi_N u_N) - |u|^2\partial_{x_1}u\|_{L^2} \leq \big\|\Pi_N(|\Pi_N u_N|^2\partial_{x_1}\Pi_N u_N - |u|^2\partial_{x_1}u)\big\|_{L^2} + \|\Pi_{>N}|u|^2\partial_{x_1}u\|_{L^2}
    \end{align*}
    Then, since 
    \begin{align*}
        \||u|^2\partial_{x_1}u\|_{L^2} \leq \|u\|_{L^\infty}^2\|\partial_{x_1}u\|_{L^2} \leq \|u\|_{H^{s'}}^3
    \end{align*}
     and $u \in H^{s'}$, we have that $|u|^2\partial_{x_1}u \in L^2$. Thus, the second term goes to zero as $N\to \infty$. 

     We add and subtract $\Pi_N(|u|^2\partial_{x_1}\Pi_N u_N)$ in the first term to obtain
     \begin{align*}
         &\big\|\Pi_N(|\Pi_N u_N|^2\partial_{x_1}\Pi_N u_N - |u|^2\partial_{x_1}u)\big\|_{L^2} \\
         &\qquad \leq \|(|\Pi_N u_N|^2 - |u|^2)\partial_{x_1}\Pi_N u_N\|_{L^2} + \||u|^2\partial_{x_1}(\Pi_N u_N - u)\|_{L^2}\\
         & \qquad \leq \|(\Pi_Nu_N - u)\overline{\Pi_Nu_N} + u(\overline{\Pi_Nu_N - u})\|_{L^\infty}\|\partial_{x_1}\Pi_N u_N\|_{L^2} + \|u\|_{L^\infty}^2\|\partial_{x_1}(\Pi_Nu_N - u)\|_{L^2}\\
          &\qquad \leq \|\Pi_Nu_N - u\|_{H^{s'}}(\|\Pi_Nu_N\|_{H^{s'}} + \|u\|_{H^{s'}})\|\Pi_N u_N\|_{H^{s'}} + \|u\|_{H^{s'}}^2\|\Pi_Nu_N - u\|_{H^{s'}}.
     \end{align*}
     Thus, since $u_N \to u$ in $H^{s'}$, the first terms also goes to zero as $N\to\infty$.

    Passing to the limit in \eqref{eq:fin int eq} in $L^2$ for each $t \in [0,T]$, we arrive at the integral equation for \eqref{eq:mZK}, so $u$ is a strong solution of \eqref{eq:mZK} in $C([-T,T],H^{s'})$. At this point, we can use the Bona-Smith argument \cite{BonaSmith1975} to conclude that $u \in C([-T,T],H^s)$. We will demonstrate this process below by proving 3 claims.

    \begin{enumerate}
        \item \textbf{Claim 1}: If $t_n \overset{n\to\infty}{\to} \overline{t}$, then $u(t_n) \overset{n\to \infty}{\rightharpoonup}u(\overline{t})$ in $H^s$, i.e. $u \in C_{w}([-T,T],H^s)$

        To prove this, we first note that since $u_N \to u$ in $C([-T,T],H^{s'})$, we have that $u \in C([-T,T],H^{s'})$. Thus $\|u(t_n) - u(\overline{t})\|_{H^{s'}} \to 0$ for $\frac{5}{3} < s' < s$ and by \eqref{eq:unif bound} we know that $\sup_n\|u(t_n)\|_{H^{s}} < \infty$. So, we can extract a weakly convergence subsequence $\{t_{n_k}\}_{k}$ such that $u(t_{n_k}) \rightharpoonup \overline{u}$ in $H^s$. Since  $u(t_n) \rightharpoonup u(\overline{t})$ in $H^{s'}$, by uniqueness of the limit in the sense of distributions, we have that $\overline{u} = u(\overline{t})$ and every convergent subsequence converges to $u(\overline{t})$. The proof of Claim 1 follows. 
        \item \textbf{Claim 2}: $u(t)$ is continuous at $t = 0$ in the strong $H^s$ topology. 

        To prove this, we let $t_n \to 0$ and show $\|u(t_n)\|_{H^s} \to \|u(0)\|_{H^s}$. 

        By Claim 1 and lower semicontinuity of the $H^s$ norm it holds that
        \begin{align}\label{eq:claim2upperbound}
            \|u(0)\|_{H^s} \leq \lim\inf\limits_{n\to \infty}\|u(t_n)\|_{H^s}. 
        \end{align}
        Using the energy estimate given in Lemma \ref{lem:energy bound}, 
        \begin{align*}
            \left|\frac{d}{dt}\|u_N(t)\|_{H^s}^2\right| \leq C\|\Pi_Nu_N(t)\|_{H^s}^3\|\Pi_N u_N(t)\|_{W^{1,\infty}} + \|\Pi_Nu_N(t)\|_{H^s}^4
        \end{align*}
        which, after integrating in time, yields 
        \begin{align*}
            \left|\|u_N(t_n)\|_{H^s}^2 - \|u_N(0)\|_{H^s}^2\right| &\leq C\|\Pi_Nu_N\|_{L^\infty([-T,T],H^s)}^3\int_0^{t_n}\|\Pi_N u_N(t)\|_{W^{1,\infty}}dt\\
            & \qquad + Ct_n\|\Pi_Nu_N(t)\|_{L^\infty([-T,T],H^s)}^4
        \end{align*}

        Then, applying Lemma \ref{lem:lower s}, we have 
        \begin{align}
            &\Big|\|u_N(t_n)\|_{H^s}^2 - \|u_N(0)\|_{H^s}^2\Big| \notag\\
            &\qquad \leq  C\|\Pi_Nu_N\|_{L^\infty([-T,T],H^s)}^3\left(t_n^{1/2}\|\Pi_N u_N\|_{L^\infty([-T,T],H^s)} + t_n^{3/2}\|\Pi_N u_N\|^3_{L^\infty([-T,T],H^s)}\right)\notag\\
            & \qquad \qquad  + Ct_n\|\Pi_Nu_N(t)\|_{L^\infty([-T,T],H^s)}^4
        \end{align}
        By \eqref{eq:unif bound}, this implies 
        \begin{align}\label{eq:bound on sequence}
        \|u_N\|_{H^s}^2 \leq \|u_N(0)\|_{H^s}^2 + C(|t_n|^{1/2}+ |t_n|^{3/2}) + C|t_n|
        \end{align}
        where $C = C(\|u_0\|_{H^s})$ depends on the initial data.
        
        Since $\|u_N(t)\|_{H^s}$ is uniformly bounded, for any subsequence $\{u_{N_k}\}_{k}$, we can extract a weakly convergent subsequence $u_{N_{k,j}}(t) \rightharpoonup v(t)$. We can consider this subsequence in $H^{s'}$ as well. Since $u_N(t) \to u(t)$ strongly in $H^{s'}$, then $u_{N_{k,j}}(t) \rightharpoonup u(t)$ in $H^{s'}$. That is, $u(t) = v(t)$. Thus, we can upgrade this convergence to the whole sequence, so $u_N(t) \rightharpoonup u(t)$ in $H^s$. 

        Then, thanks to \eqref{eq:bound on sequence}, we have 
        \begin{align}
            \|u(t_n)\|_{H^s}^2 &\leq \lim\inf\limits_{N \to \infty}\|u_N(t_n)\|_{H^s}^2\nonumber\\
            & \leq \lim\inf\limits_{N \to \infty}\Big(\|u(0)\|_{H^s}^2 + C(|t_n|^{1/2} + |t_n|^{3/2}) + C|t_n|\Big)\nonumber\\
            & = \|u(0)\|_{H^s}^2 + C(|t_n|^{1/2} + |t_n|^{3/2}) + C|t_n|.
        \end{align}
        Thus, since $t_n \to 0$,
        \begin{align}\label{eq:claim2lowerbound}
            \lim\sup_{n\to \infty}\|u(t_n)\|_{H^s}^2 \leq \|u(0)\|_{H^s}^2.
        \end{align}
        Combining \eqref{eq:claim2upperbound} with \eqref{eq:claim2lowerbound},
         Claim 2 is proved.
        \item \textbf{Claim 3}: $u(t)$ is continuous in the $H^s$ topology for any $t \in [-T,T]$. 

        To prove this, replace the initial data of the truncated initial value problem with $u_N(\overline{t}) = u(\overline{t})$ and repeat the argument for Claim 2. 
    \end{enumerate}
\end{proof}
\begin{proposition}[Uniqueness]
Let $s > \frac{5}{3}$ and $T > 0$. Suppose $u,v \in C([0,T],L^2)\cap L^\infty([0,T],H^s)$ are two solutions of \eqref{eq:mZK} with $u(0) = v(0)$. Then $u(t)= v(t)~\forall~ t\in [-T,T]$.    
\end{proposition}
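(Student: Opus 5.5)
Set $w = u - v$ and run an $L^2$ energy estimate on $w$, closed with Gronwall. The main difficulty is the derivative loss in the difference of the nonlinearities. It is handled by symmetrizing the term in which the derivative falls on $w$, and then controlling the remaining coefficient through $\|\nabla u\|_{L^\infty}+\|\nabla v\|_{L^\infty}$ in $L^1_t$, which is exactly what the Strichartz bound \eqref{eq:L1Linf for mZK} supplies.

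\textbf{Equation for $w$.} Since $u,v \in L^\infty([0,T],H^s)$ with $s>5/3>1$, Sobolev embedding gives $u,v\in L^\infty_tL^\infty_x$ and $\nabla u,\nabla v\in L^\infty_t L^2_x$. Hence $|u|^2\partial_{x_1}u \in L^\infty_tL^2_x$, and the Duhamel formula makes $w\in C([0,T],L^2)$ a solution of
\begin{align*}
(\partial_t+\partial_{x_1}\Delta)w = 6\big(|u|^2\partial_{x_1}w + (|u|^2-|v|^2)\partial_{x_1}v\big), \qquad |u|^2-|v|^2 = w\bar u + v\bar w .
\end{align*}

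\textbf{Energy estimate.} The linear part is skew-adjoint and contributes nothing to $\frac{d}{dt}\|w\|_{L^2}^2$. For the first nonlinear term, integration by parts gives
\[
\mathrm{Re}\int |u|^2\partial_{x_1}w\,\bar w = -\tfrac12\int \partial_{x_1}(|u|^2)|w|^2,
\]
which is bounded by $\|u\|_{L^\infty}\|\nabla u\|_{L^\infty}\|w\|_{L^2}^2$. The second term is bounded directly by $(\|u\|_{L^\infty}+\|v\|_{L^\infty})\|\nabla v\|_{L^\infty}\|w\|_{L^2}^2$. Together,
\[
\frac{d}{dt}\|w\|_{L^2}^2 \le C\big(\|u\|_{L^\infty_tH^s}+\|v\|_{L^\infty_tH^s}\big)\big(\|\nabla u\|_{L^\infty}+\|\nabla v\|_{L^\infty}\big)\|w\|_{L^2}^2 .
\]

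\textbf{Rigorous justification.} The formal computation needs care because $w$ is only in $C_tL^2$. I would apply it to $\Pi_N w$: then $\partial_t\Pi_N w$ is a genuine $L^2$ derivative and the linear term vanishes exactly. The nonlinear term becomes $\mathrm{Re}\int \Pi_N(|u|^2\partial_{x_1}w)\,\overline{\Pi_N w}$, and the error $[\Pi_N,|u|^2]\partial_{x_1}w$ must be shown to vanish as $N\to\infty$. A simpler alternative is to test against $w$ via the mild formulation: both nonlinear pieces lie in $L^1_tL^2_x$, so $\|w\|_{L^2}^2$ is absolutely continuous with the expected derivative. The integration by parts then only requires $|u|^2\in W^{1,\infty}$, $w\in H^1$ and $\partial_{x_1}w\in L^2$, all of which hold for a.e.\ $t$ because $u,v\in L^\infty_tH^s$. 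I expect the main obstacle to be this justification, together with the fact that $\nabla u\in L^\infty_x$ is not automatic when $s<2$.

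\textbf{Conclusion.} The Remark after Proposition \ref{lem:lower s} gives, for solutions of \eqref{eq:mZK} in $L^\infty_tH^s$, that $\|\nabla u\|_{L^1_TL^\infty}$ and $\|\nabla v\|_{L^1_TL^\infty}$ are finite. That argument only uses the Duhamel formula and Lemma \ref{lem:strichartz}, both valid for these solutions on subintervals of length at most $1$, iterated to cover $[0,T]$. Gronwall then yields
\[
\|w(t)\|_{L^2}^2\le \|w(0)\|_{L^2}^2\exp\Big(C\int_0^t(\|\nabla u\|_{L^\infty}+\|\nabla v\|_{L^\infty})\Big)=0 .
\]
The negative time direction follows by time reversal, since $(t,x)\mapsto u(-t,-x)$ again solves \eqref{eq:mZK}.
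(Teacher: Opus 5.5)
Your proposal is correct and follows essentially the same route as the paper. Both arguments run an $L^2$ energy identity for $u-v$, integrate by parts in the term where $\partial_{x_1}$ falls on the difference, and control the resulting coefficient with the Strichartz-based bound $\partial_{x_1}u,\partial_{x_1}v\in L^1_tL^\infty_x$ from \eqref{eq:L1Linf for mZK}. The differences are minor: the paper justifies that bound for a general solution by mollifying ($u_\varepsilon=\theta_\varepsilon u$, a uniform-in-$\varepsilon$ estimate, then a duality/Fatou limit), whereas you apply the Remark directly, which is legitimate since the pieces $Q^j\nabla u$ sum absolutely in $L^1_tL^\infty_x$. The paper also closes by short-time absorption plus continuation instead of your Gronwall step, and your mild-formulation justification of the energy identity and your time-reversal treatment of negative times are details the paper leaves implicit.
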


\begin{proof}

 Let $u$ be a strong solution of the initial value problem $\eqref{eq:mZK}$. Let $\theta$ be a smooth and compactly supported function on $\R^2$, and we define on $\T^2$ for $\ve \in (0,1)$ the function $\theta_{\ve}$ such that $\hat{\theta}_{\ve}(\mathbf{n}) = \hat{\theta}(\ve \mathbf{n})$ and $\hat{\theta}(0) = 1$. We abuse notation and call $\theta_\ve$ the associated multiplier operator. Then if we define $u_\ve(t): = \theta_\ve u(t)$ we write 
    \begin{align}
        u_\ve(t) = W(t)\theta_\ve u_0 + \int_0^tW(t - t')\theta_\ve(|u|^2\partial_{x_1}u)(\tau)d\tau
    \end{align}
    in the pointwise sense. So we have 
    \begin{align}
        \begin{cases}
            \partial_t u_\ve + \partial_{x_1}\Delta u_\ve = \theta_\ve(|u|^2\partial_{x_1}u), & (t,x) \in \R\times \T^2, \\
            u_\ve(0) = \theta_\ve u_0.
        \end{cases}
    \end{align}
    Repeating the proof of Lemma \ref{lem:lower s}, we have 
    \begin{align}
        \sup_{\ve}\|\partial_{x_1} u_\ve\|_{L^1([-T,T],L^\infty} = C < \infty,
    \end{align}
    with $C$ is independent of $\ve$ and $T$ is small enough. We also note that $u_\ve$ converges to $u$ in $C([-T,T], H^s)$ as $\ve \to 0$. We claim 
    \begin{align}\label{eq:W1inf bound}
        \|\partial_{x_1}u\|_{L^1([-T,T], L^\infty)} \leq C
    \end{align} also. To prove this, pick $p \geq 1$ and $g$ such that $\|g\|_{L^\infty([-T,T], L^{q}_x)} \leq 1$ where $q$ is the H\"older conjugate of $p$. Then
    \begin{align*}
        \left|\int_{-T}^T\int_{\T^2} \partial_{x_1}u\overline{g}dxdt \right| &\leq \int_{-T}^T\int_{\T^2} |\partial_{x_1}u||g|dxdt\\
        & \leq \lim\inf\limits_{\ve \to 0}\int_{-T}^T\int_{\T^2} |\partial_{x_1}u_\ve||g|dxdt\\
        & \leq \int_{-T}^T\|\partial_{x_1}u_\ve\|_{L^p(\T^2)}\|g\|_{L^q(\T^2)}dt\\
        & \leq C\int_{-T}^T\|\partial_{x_1}u_\ve\|_{L^\infty}\\
        & \leq C. 
    \end{align*}
    As a consequence
    \begin{align}
        \int_{-T}^T\left(\int_{\T^2}|\partial_{x_1}u|^pdx\right)^{1/p}dt \leq C. 
    \end{align}
    Recall that for any function $f$ we have 
    \begin{align}
        \|f\|_{L^\infty(\T^2)} = \lim\limits_{p \to \infty}\left(\frac{1}{(2\pi)^2}\int_{\T^2} |f|^pdx\right)^{1/p}.
    \end{align}
    Taking a sequence $\{p_j\}$ such that $p_j \to \infty$ and applying Fatou's lemma, we obtain the result. 

    Now assume that $v$ is another strong solution of \eqref{eq:mZK}. 
Since $u,v$ are solutions to \eqref{eq:mZK}, $u - v$ solves 
\begin{align}\label{eq:diff eq}
    (\partial_t + \partial_{x_1}\Delta)(u - v) = |u|^2\partial_{x_1}u - |v|^2\partial_{x_1}v.
\end{align}
Using \eqref{eq:diff eq}, we obtain
    \begin{align*}
        \|u(t) - v(t)\|_{L^2}^2 = 2 Re\int_0^t\int (|u|^2\partial_{x_1}u - |v|^2\partial_{x_1}v)\overline{(u-v)}dxd\tau.
    \end{align*}

    Adding and subtracting $|v|^2\partial_{x_1}u(\overline{u - v})$ and rearranging the RHS, we have 
    \begin{align*}
        \|u(t) - v(t)\|_{L^2}^2 &= 2 Re\int_0^t\int (|u|^2 - |v|^2)(\overline{u - v})\partial_{x_1}u\,dxd\tau + 2Re \int_0^t\int |v|^2\partial_{x_1}(u -v)(\overline{u -v})\,dxd\tau\\
        & = 2 Re\int_0^t\int (|u|^2 - |v|^2)(\overline{u - v})\partial_{x_1}u\,dxd\tau - Re \int_0^t\int \partial_{x_1}(|v|^2)|u - v|^2\,dxd\tau.
    \end{align*}
    Using that $|u|^2 - |v|^2 = u(\overline{u - v}) + \overline{v}(u - v)$, we get 
    \begin{align}\label{eq: uniqueness diff}
    \|u(t) - v(t)\|_{L^2}^2 &= 2Re \int_0^t\int u(\overline{u - v})^2\partial_{x_1}u\,dxd\tau + 2Re \int_0^t \int \overline{v}|u - v|^2\partial_{x_1}u\,dxd\tau \nonumber\\
    &\qquad - Re \int_0^t\int \partial_{x_1}(|v|^2)|u -v|^2\,dxd\tau.
    \end{align}
    Let $M = \sup_{t \in [-T,T)}\Big\{\|u(t)\|_{H^s}, \|v(t)\|_{H^s}\Big\}$, which is finite since $u$ and $v$ are strong solutions. 

    Then \eqref{eq: uniqueness diff} becomes 
    \begin{align*}
        \sup_{0 \leq \tau \leq t}\|u(\tau) - v(\tau)\|_{L^2}^2 \leq CM\sup_{0 \leq \tau \leq t}\|u(\tau) - v(\tau)\|_{L^2}^2\int_0^t (\|\partial_{x_1}u\|_{L^\infty} + \|\partial_{x_1}v\|_{L^\infty})d\tau. 
    \end{align*}
    Since  $\partial_{x_1}u,\partial_{x_1}v \in L^1([-T,T], L^\infty)$ thanks to \eqref{eq:W1inf bound}, there exists a $t > 0$ such that we absorb the RHS into the LHS so that we get 
    \begin{align}
        \sup_{0\leq \tau \leq t}\|u(\tau) - v(\tau)\|_{L^2} = 0. 
    \end{align}
    If we define $\overline{t} = \sup\{t \in [0,T]: u(t) = v(t)\}$ we've shown that $\overline{t} > 0$. Repeating this argument starting from $\overline{t}$, we obtain $\overline{t} = T$. 
\end{proof}

\section{Failure of uniform continuity}\label{sec:fail_unif_cont}

In this section we prove the failure of uniform continuity of the flow map for \eqref{eq:mZK} by adapting the procedure outlined in \cite{Linares2019} and \cite{Herr2006} to the context of the complex-valued mZK equation \eqref{eq:mZK}. 
As in \cite{Linares2019} and \cite{Herr2006}, the proof of theorem is based on the construction of two families of approximate solutions designed to diverge as $m \to \infty$. However, these families must be tailored to the equation \eqref{eq:mZK}, taking into account that solutions of \eqref{eq:mZK} are complex-valued and that the nonlinearity is cubic.

\begin{proof}[Proof of Theorem \ref{thm:failure}]

The proof follows in 3 steps: 
\begin{enumerate}
    \item First, we construct two families of approximate solutions to the mZK equation \eqref{eq:mZK}, $\{u_{m,0}\}_{m}, \{u_{m,1}\}_{m}$ such that 
    \begin{align*}
        \|u_{m,0}(0) - u_{m,1}(0)\|_{H^s} \overset{m\to\infty}{\rightarrow} 0,
    \end{align*}
    and 
  \begin{align*}
        \lim_{m\rightarrow \infty}\inf\|u_{m,1}(t) - u_{m,0}(t)\|_{H^s} > ct \qquad \forall t > 0.
    \end{align*}
    \item Next, for $s > \frac{5}{3}$, we prove estimates on the difference between a true solution and the family of approximate solutions. That is, we show that these families are \textit{good} approximations of the true solutions. 
    \item Finally, we consider two true solutions of \eqref{eq:mZK} with $s > \frac{5}{3}$ that are well approximated by $\{u_{m,0}\}_m$ and $\{u_{m,1}\}_{m}$. We control the difference of these true solutions by the difference of the two families of approximate solutions and the estimates from step (2) to conclude. 
\end{enumerate}

\underline{\textbf{Step 1:}} We start by introducing initial data of the two families $\{u_{m,0}\}_m$ and $\{u_{m,1}\}_m$ of approximate solutions.

Let $r > 0$ and consider 
\begin{align}\label{eq:umj0}
    u_{m,j}(0,x_1,x_2): = (2\pi)^{-1}[rm^{-s}e^{i(mx_1 -x_2)} + c_{m,j}]
\end{align}
where 
\begin{align*}
c_{m,j} : =
    \begin{cases}
        m^{-1/2},& j = 1\\
        0, & j = 0. 
    \end{cases}
\end{align*}

We note that 
\begin{align}
\|u_{m,0}(0) - u_{m,1}(0)\|_{H^s} = \|m^{-1/2}\|_{H^s} = \frac{1}{\sqrt{m}} \overset{m\to\infty}{\rightarrow} 0.
\end{align}

We also observe that, by \eqref{eq:umj0}, 
\begin{align}\label{eq:L2 umj0}
    \|u_{m,j}(0)\|_{L^2(\T^2)}^2 &= (2\pi)^{-2}\int_{\T^2} r^2m^{-2s} + (c_{m,j})^2 + 2rm^{-s}c_{m,j}\cos(mx_1 - x_2)dx_1dx_2\notag\\
    & = r^2m^{-2s} + (c_{m,j})^2 + 2rm^{-s}c_{m,j}(2\pi)^{-2}\int_{\T^2}\cos(mx_1 - x_2)\,dx_1dx_2\notag\\
    & = r^2m^{-2s} + (c_{m,j})^2.
\end{align}

Now, we define the map $\mathcal{G}_t^: u_0 \in H^s(\T^2) \mapsto u_0(x_1 + t(m^2 + 1 + \|u_0\|_{L^2}^2),x_2)$.

Then, taking inspiration from \cite{Herr2006}, we define a family of approximate solutions to \eqref{eq:mZK} given by 
\begin{align*}
    u_{m,j}(t,x_1,x_2): = \mathcal{G}_t(u_{m,j}(0,x_1,x_2)), \qquad m \in \N.
\end{align*}

That is, thanks to \eqref{eq:umj0} and \eqref{eq:L2 umj0}, 
\begin{align}\label{eq:approx fam}
    u_{m,j}(t,x_1,x_2)  = (2\pi)^{-1}rm^{-s}e^{i\Phi_{m,j}(t,x_1,x_2)} + c_{m,j},
\end{align}
with 
\begin{align}
    \Phi_{m,j} := (mx_1 - x_2) + tm(m^2 + 1 + r^2m^{-2s} + c_{m,j}^2). 
\end{align}

If we now examine the difference of $u_{m,1}(t,x_1,x_2)$ and $u_{m,0}(t,x_1,x_2)$, we observe
\begin{align}
    \|u_{m,1}(t,x_1,x_2)& - u_{m,0}(t,x_1,x_2)\|_{H^s} \notag\\
    & = (2\pi)^{-1}\bigg\|rm^{-s}e^{i(mx_1 -x_2)}e^{itm(m^2+ 1 + r^2m^{-2s} + m^{-1})}+\frac{1}{\sqrt{m}} - rm^{-s}e^{i(mx_1 - x_2)}e^{itm(m^2 + 1 + r^2m^{-2s})}\bigg\|_{H^s}\notag\\
    & = (2\pi)^{-1}\bigg\|rm^{-s}e^{i(mx_1 - x_2)}e^{itm(m^2 + 1 + r^2m^{-2s})}(e^{it} - 1) + \frac{1}{\sqrt{m}}\bigg\|_{H^s}\notag\\
    & \geq \frac{r}{2}|\sin(t)| - \frac{1}{\sqrt{m}} > 0
\end{align}
for $m$ large and $t > 0$.

\underline{\textbf{Step 2:}} Fix $m > 0$. We want to control the difference between the genuine solution $u_j$ of \eqref{eq:mZK} corresponding to initial data $u_{m,j}(0)$ for $j = 0,1$, that is guaranteed by the local well-posedness theory, and the approximate solution $u_{m,j}$ so that we can exploit the specific shape of the approximate solution.

Let $v_j = u_j - u_{m,j}$. Then $v_j$ solves the following equation, 
\begin{align}
    (\partial_t + \partial_{x_1}\Delta)v_j  = |u_j|^2\partial_{x_1}u_j - |u_{m,j}|^2\partial_{x_1}u_{m,j} - R_{m,j},
\end{align}
where $R_{m,j} = (\partial_t + \partial_{x_1}\Delta)u_{m,j} - |u_{m,j}|^2\partial_{x_1}u_{m,j}$. 

We wish to obtain an energy bound for $v_j$ like we did in Lemma \ref{lem:energy bound}. Using the above equation we have 
\begin{align}
    \frac{d}{dt}\|v_j\|_{L^2}^2 &= 2Re\int_{\T^2}\dot{v}_j\overline{v}_j  \notag \\
    & = -2Re \int_{\T^2}\overline{v}_j(\partial_{x_1}\Delta v_j)  + 2Re \int_{\T^2}\overline{v}_j\Big(|u_j|^2\partial_{x_1}u_j - |u_{m,j}|^2\partial_{x_1}u_{m,j}\Big)  - 2Re\int_{\T^2} \overline{v}_jR_{m,j} \notag\\
    & = I + II + III. 
\end{align}
We will estimate each of these terms individually. We first observe that

\begin{align}
        I &= -2Re\int_{\T^2}(\partial_{x_1}\Delta v_j)\overline{v}_j  = 0.\label{eq:term I}
    \end{align}

We can decompose II using that $u_j = u_{m,j} + v_j$ so that
\begin{align}
    &|u_j|^2\partial_{x_1}u_j - |u_{m,j}|^2\partial_{x_1}u_{m,j}\notag\\
    & \qquad = |u_{m,j} + v_j|^2\partial_{x_1}(u_{m,j} + v_j) - |u_{m,j}|^2\partial_{x_1}u_{m,j}\notag\\
    & \qquad = |u_{m,j}|^2\partial_{x_1}v_j + [|v_j|^2 + 2Re(u_{m,j}\overline{v}_j)]\partial_{x_1}(u_{m,j} + v_j)\notag\\
    & \qquad = |u_{m,j}|^2\partial_{x_1}v_j + 2Re(u_{m,j}\overline{v}_j)\partial_{x_1}u_{m,j} + |v_j|^2\partial_{x_1}u_{m,j} + |v_j|^2\partial_{x_1}v_j + 2Re(u_{m,j}\overline{v}_j)\partial_{x_1}v_j.
\end{align}
We will estimate the terms in II that are quadratic and cubic in $v_j$ using H\"older's inequality and that $\|u_{m,j}\|_{L^\infty}, \|\partial_{x_1}u_{m,j}\|_{L^\infty} < C$ provided by the definition of $u_{m,j}$. We now observe that
\begin{enumerate}[label = (\roman*)]
    \item  \begin{align}\label{eq:II 1}
      2Re\int_{\T^2}2Re(u_{m,j}\overline{v}_j)\partial_{x_1}u_{m,j}\overline{v}_j &\leq C\|u_{m,j}\|_{L^\infty}\|\partial_{x_1}u_{m,j}\|_{L^\infty}\|v_j\|_{L^2}^2\notag\\
     & \leq C\|v_j\|_{L^2}^2.
 \end{align}
    \item \begin{align}\label{eq:II 2}
    2Re\int_{\T^2}|u_{m,j}|^2\partial_{x_1}v_j\overline{v}_j &= \int_{\T^2}|u_{m,j}|^2\partial_{x_1}|v_j|^2 \notag\\
     &= - \int_{\T^2}\partial_{x_1}|u_{m,j}|^2|v_j|^2\notag\\
     & \leq C\|\partial_{x_1}|u_{m,j}|^2\|_{L^\infty}\|v_j\|_{L^2}^2\notag\\
     & \leq C\|\partial_{x_1}u_{m,j}\|_{L^\infty}\|u_{m,j}\|_{L^\infty}\|v_j\|_{L^2}^2\notag\\
     & \leq C\|v_j\|_{L^2}^2.
 \end{align}
 \item \begin{align}\label{eq:II 3}
    2Re\int_{\T^2}|v_j|^2\partial_{x_1}u_{m,j}\overline{v}_j
    & \leq C\|\partial_{x_1}u_{m,j}\|_{L^\infty}\|v_j\|_{L^\infty}\|v_j\|_{L^2}^2\notag\\
    & \leq C\|v_j\|_{L^\infty}\|v_j\|_{L^2}^2.
\end{align}
\item \begin{align}\label{eq:II 4}
    4Re\int_{\T^2} Re(u_{m,j}\overline{v}_j)\partial_{x_1}v_j\overline{v}_j &\leq C\|\partial_{x_1}v_j\|_{L^\infty}\|u_{m,j}\|_{L^\infty}\|v_j\|_{L^2}^2\notag\\
    & \leq C\|\partial_{x_1}v_j\|_{L^\infty}\|v_j\|_{L^2}^2.
\end{align}
 
\end{enumerate}

Now, we are left to consider the quartic term (in $v_j$) in II.

\begin{align}\label{eq:II 5}
2Re\int_{\T^2}|v_j|^2\partial_{x_1}v_j\overline{v}_j = \frac{1}{2}\int_{\T^2}\partial_{x_1}(|v_j|^4) = 0.
\end{align}

Then, summing the estimates \eqref{eq:II 1} - \eqref{eq:II 5}, we have 
\begin{align}
    II \leq C(1 + \|v_j\|_{L^\infty} + \|\partial_{x_1}v_j\|_{L^\infty})\|v_j\|_{L^2}^2.
\end{align}

Lastly, by first applying H\"older's inequality and then Young's inequality, we have 
\begin{align}
   III =  2Re \int_{\T^2}\overline{v}_jR_{m,j} &\leq C\|v_j\|_{L^2}\|R_{m,j}\|_{L^2}\notag\\
    & \leq C\left(\|v_j\|_{L^2}^2 + \|R_{m,j}\|_{L^2}^2\right).
\end{align}

Combining these estimates on I, II, and III, we obtain 
\begin{align}\label{eq:combine estimates}
    \frac{d}{dt}\|v_j\|_{L^2}^2 &\leq C(1 + \|v_j\|_{L^\infty} + \|\partial_{x_1}v\|_{L^\infty})\|v_j\|_{L^2}^2 + C\|R_{m,j}\|_{L^2}^2
\end{align}

Since $v_j = u_j - u_{m,j}$, we have that
\begin{align}\label{eq:bound vj}
    \|v_j\|_{L^\infty} \leq \|u_j\|_{L^\infty} + \|u_{m,j}\|_{L^\infty} \leq C\|u_j\|_{L^\infty}
\end{align}
and 
\begin{align}\label{eq:bound vjx}
    \|\partial_{x_1}v_j\|_{L^\infty} \leq \|\partial_{x_1}u_j\|_{L^\infty} + \|\partial_{x_1}u_{m,j}\|_{L^\infty} \leq C\|\partial_{x_1}u_j\|_{L^\infty}.
\end{align}
We note that by Sobolev embedding, since $s > \frac{5}{3}$,  $\|u_j\|_{L^\infty} \leq \|u_j\|_{H^s}$ and thanks to the proof of Proposition \ref{prop:existence}, $\|u_j\|_{H^s} \leq C$.

Applying the estimates \eqref{eq:bound vj} and \eqref{eq:bound vjx} and integrating in time, \eqref{eq:combine estimates} becomes
\begin{align}
    \|v_j\|_{L^2}^2 \leq C\int_{0}^t\|R_{m,j}\|_{L^2}^2d\tau + C\int_0^t(1 + \|\partial_{x_1}u_j\|_{L^\infty})\|v_j\|_{L^2}^2 d\tau.
\end{align}
Then, thanks to Gronwall's inequality, 
\begin{align}\label{eq:after gronwall}
    \|v_j\|_{L^2}^2 &\leq C\int_0^t\|R_{m,j}\|_{L^2}^2\,d\tau + C\int_0^t\left(\int_0^s\|R_{m,j}\|_{L^2}^2d\tau\right)(1 + \|\partial_{x_1}u_j\|_{L^\infty})\,\text{exp}\left(\int_s^t(1 + \|\partial_{x_1}u_j\|_{L^\infty})dr\right)ds\notag\\
    & \leq C\left(\int_0^t \|R_{m,j}\|_{L^2}^2\,d\tau\right)\left(1 + \int_0^t(1 + \|\partial_{x_1}u_j\|_{L^\infty})\,\text{exp}\left(\int_s^t(1 + \|\partial_{x_1}u_j\|_{L^\infty})dr\right)ds\right)
\end{align}
for $t \in [0,T]$ with $T$ given by Lemma \ref{lem:local time}.

Before we conclude, we estimate $R_{m,j}$. Recall that 
\begin{align*}R_{m,j} = (\partial_t + \partial_{x_1}\Delta)u_{m,j} - |u_{m,j}|^2\partial_{x_1} u_{m,j}.\end{align*}

First we examine the linear part. Using the definition of $u_{m,j}$ in \eqref{eq:approx fam} we have
\begin{align*}
    \partial_tu_{m,j} &= rm^{-s}(im[m^2 + 1 + r^2m^{-2s} + c_{m,j}^2])e^{i\Phi_{m,j}(t)}
\end{align*}
and 
\begin{align*}
    (\partial_{x_1x_1x_1} + \partial_{x_1}\partial_{x_2x_2})u_{m,j} &= rm^{-s}(-im[m^2 + 1])e^{i\Phi_{m,j}(t)}\\
\end{align*}
so that 
\begin{align*}(\partial_t + \partial_{x_1}\Delta)u_{m,j} = irm^{-s}[r^2m^{-2s + 1} + mc_{m,j}^2]e^{i\Phi_{m,j}(t)} \end{align*}

Specifically, 
\begin{align}
    (\partial_t + \partial_{x_1}\Delta)u_{m,0} &= ir^3m^{1 - 3s}e^{i\Phi_{m,0}(t)} \label{eq:lin Rm0}\\
    (\partial_t + \partial_{x_1}\Delta)u_{m,1} &= [ir^3m^{1-3s} + irm^{-s}]e^{i\Phi_{m,1}(t)}\label{eq:lin Rm1}
    %[{\color{blue}ir^3m^{1-3s}} + {\color{red}irm^{-s}}]e^{i\Phi_{m,1}(t)}\label{eq:lin Rm1}
\end{align}
Next we examine the nonlinearity in $R_{m,j}$. 
\begin{align}
    |u_{m,0}|^2\partial_{x_1}u_{m,0} &= ir^3m^{1 - 3s}e^{i\Phi_{m,0}(t)}\label{eq:nonlin Rm0}\\
    |u_{m,1}|^2\partial_{x_1}u_{m,1} & = rm^{-s}\Big[r^2m^{-2s} + m^{-1} + 2rm^{-s}m^{-1/2}\cos(\Phi_{m,1}(t))\Big]ime^{i\Phi_{m,1}(t)}\notag\\
    & = \Big[ir^3m^{1-3s} + irm^{-s} + 2ir^2m^{1/2-2s}\cos(\Phi_{m,1}(t))\Big]e^{i\Phi_{m,1}(t)}.\label{eq:nonlin Rm1}
    %\Big[{\color{blue}ir^3m^{1-3s}} + {\color{red}irm^{-s}} + 2ir^2m^{1/2-2s}\cos(\Phi_{m,1}(t))\Big]e^{i\Phi_{m,1}(t)}.\label{eq:nonlin Rm1}
\end{align}

Then combining \eqref{eq:lin Rm0} with \eqref{eq:nonlin Rm0}, and \eqref{eq:lin Rm1} with \eqref{eq:nonlin Rm1}, 
\begin{align}
    R_{m,0} &= ir^3m^{1-3s}e^{i\Phi_{m,0}(t)} - ir^3m^{1-3s}e^{i\Phi_{m,0}(t)} = 0,\\
    R_{m,1} &= 2ir^2m^{1/2 - 2s}\cos(\Phi_{m,1}(t))e^{i\Phi_{m,1}(t)}.
\end{align}
Hence, 
\begin{align*}
    |R_{m,j}| = |(\partial_t + \partial_{x_1}\Delta)u_{m,j} - |u_{m,j}|^2\partial_{x_1} u_{m,j}| \leq Cm^{1/2 - 2s}.
\end{align*}

which, together with \eqref{eq:after gronwall} and \eqref{eq:L1Linf for mZK} which implies $\|\partial_{x_1}u_j\|_{L^1_TL^\infty} < C$, allows us to conclude
\begin{align}\label{eq:v L2 bound}
    \|v_j\|_{L^2} \leq Cm^{1/2 - 2s}, 
\end{align}
where $C$ depends on $r$. 

By direct calculation, using the Fourier transform,  $\|u_{m,j}(t)\|_{H^\sigma} \sim m^{\sigma - s}$. Also, by persistence of regularity and local well-posedness we obtain $\|u_{j}\|_{H^{\sigma}} \lesssim m^{\sigma - s}$. Hence, 
\begin{align}\label{eq:Hs+1}
    \|v_j\|_{H^{s+1}} \leq \|u_j\|_{H^{s+1}} + \|u_{m,j}\|_{H^{s+1}} \lesssim m.
\end{align}
Then interpolating \eqref{eq:v L2 bound} and \eqref{eq:Hs+1} we have 
\begin{align*}
    \|v_j\|_{H^s} \lesssim \|v_j\|_{L^2}^{\frac{1}{s+1}}\|v_j\|_{H^{s+1}}^{\frac{s}{s+1}} \lesssim m^{\frac{1/2 - s}{s+1}}\lesssim m^{-7/16}.
\end{align*}

\underline{\textbf{Step 3:}} Now, take $u^1_m,u^0_m$ to be the solutions of \eqref{eq:mZK} with initial conditions $u^1_m(0,x_1,x_2) = u_{m,1}(0,x_1,x_2)$ and $u^0_m(0,x_1,x_2) = u_{m,0}(0,x,y)$ as given by the local theory. Then, thanks to steps 1 and 2 we conclude
\begin{align*}
    \|u^1_m(t,\cdot,\cdot) - u^0_m(t,\cdot,\cdot)\|_{H^s} & = \|u^1_m(t,\cdot,\cdot) -u_{m,1}(t,\cdot,\cdot) + u_{m,1}(t,\cdot,\cdot) - u_{m,0}(t,\cdot,\cdot) + u_{m,0}(t,\cdot,\cdot) - u^0_m(t,\cdot,\cdot)\|_{H^s}\\
    &\geq \|u_{m,1}(t,\cdot,\cdot) - u_{m,0}(t,\cdot,\cdot)\|_{H^s} + O(m^{-7/16})\\
    & \geq \frac{r}{2}|\sin(t)| + O(m^{-1/2}) + O(m^{-7/16}) > ct
\end{align*}
for $m$ large and $t \in [0,1]$. 

\end{proof}

\appendix
\section{An auxiliary lemma}

\begin{lemma}\label{lem:sq}
If $s > 1$ then \[\|J^s(|f|^2)\|_{L^2(\T^2)} \leq \|J^s f\|_{L^2(\T^2)}^2.\]
\end{lemma}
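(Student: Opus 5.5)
The plan is the standard Fourier-side proof that $H^s(\T^2)$ is an algebra for $s>1$, applied to the product $f\overline{f}$. The inequality will come with a constant $C=C(s)$, which the statement absorbs into the normalization; this is consistent with how Lemma \ref{lem:sq} is used in the proofs of Lemma \ref{lem:energy bound} and Proposition \ref{lem:lower s}, where all bounds hold up to constants.

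\textbf{Step 1: Fourier side.} Write $a_{\mathbf{k}}=\hat f(\mathbf{k})$. Then $\widehat{\overline f}(\mathbf{k})=\overline{a_{-\mathbf{k}}}$, so
\begin{align*}
\widehat{|f|^2}(\mathbf{k})=\sum_{\mathbf{k}_1-\mathbf{k}_2=\mathbf{k}}a_{\mathbf{k}_1}\overline{a_{\mathbf{k}_2}}.
\end{align*}
Put $\langle \mathbf{k}\rangle=(1+|\mathbf{k}|^2)^{1/2}$. Since $\mathbf{k}=\mathbf{k}_1-\mathbf{k}_2$, we have $|\mathbf{k}|\le 2\max(|\mathbf{k}_1|,|\mathbf{k}_2|)$, which gives the pointwise Peetre-type bound
\begin{align*}
\langle\mathbf{k}\rangle^s\le 2^s\big(\langle\mathbf{k}_1\rangle^s+\langle\mathbf{k}_2\rangle^s\big).
\end{align*}
Hence
\begin{align*}
\langle\mathbf{k}\rangle^s|\widehat{|f|^2}(\mathbf{k})|\le 2^s\big[(A*B)(\mathbf{k})+(B*A)(\mathbf{k})\big],
\end{align*}
up to the reflection $\mathbf{k}_2\mapsto-\mathbf{k}_2$. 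Here $A_{\mathbf{k}}=\langle\mathbf{k}\rangle^s|a_{\mathbf{k}}|$ and $B_{\mathbf{k}}=|a_{\mathbf{k}}|$. The reflection does not change any $\ell^p$ norm.

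\textbf{Step 2: Young's inequality.} Take the $\ell^2(\Z^2)$ norm and apply Young's inequality $\|A*B\|_{\ell^2}\le\|A\|_{\ell^2}\|B\|_{\ell^1}$. This gives
\begin{align*}
\|J^s|f|^2\|_{L^2}\lesssim\|J^sf\|_{L^2}\,\|\hat f\|_{\ell^1},
\end{align*}
with the Plancherel normalizations of the torus absorbed into the constant.

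\textbf{Step 3: the $\ell^1$ bound, the only place $s>1$ enters.} By Cauchy--Schwarz,
\begin{align*}
\|\hat f\|_{\ell^1}\le\Big(\sum_{\mathbf{k}\in\Z^2}\langle\mathbf{k}\rangle^{-2s}\Big)^{1/2}\|J^sf\|_{L^2}.
\end{align*}
The lattice sum converges precisely when $2s>2$, by comparison with $\int_{\R^2}(1+|x|^2)^{-s}\,dx$. This step is also the expected obstacle: it is where the threshold $s>1$ (the critical exponent $d/2$ in dimension $2$) must be used. Combining Steps 2 and 3 yields
\begin{align*}
\|J^s(|f|^2)\|_{L^2(\T^2)}\le C(s)\|J^sf\|_{L^2(\T^2)}^2,
\end{align*}
which is the claim. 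The same argument shows, more generally, that $\|J^s(fg)\|_{L^2}\lesssim\|J^sf\|_{L^2}\|J^sg\|_{L^2}$ for $s>1$.
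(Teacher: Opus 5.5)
Your proof is correct and matches the paper's argument. Both use the Peetre-type bound $\langle \mathbf{k}\rangle^s \lesssim \langle\mathbf{k}_1\rangle^s+\langle\mathbf{k}_2\rangle^s$, Young's inequality $\ell^2*\ell^1\to\ell^2$, and Cauchy--Schwarz with the lattice sum $\sum_{\mathbf{k}}\langle\mathbf{k}\rangle^{-2s}$, which converges for $s>1$. The paper first proves $\|J^s(fg)\|_{L^2}\le C_s\|J^sf\|_{L^2}\|J^sg\|_{L^2}$ and then sets $g=\overline f$, whereas you treat $f\overline f$ directly; that difference is only cosmetic, and like the paper you rightly get a constant $C_s$ that the statement suppresses.
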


 \begin{proof}
     We first prove the more general product estimate
\[
    \|J^s(fg)\|_{L^2(\mathbb T^2)}
    \leq
    C_s
    \|J^s f\|_{L^2(\mathbb T^2)}
    \|J^s g\|_{L^2(\mathbb T^2)}
\]
for \(s>1\). Write the Fourier series as
\[
    f(x)=\sum_{k\in \mathbb Z^2}\widehat f(k)e^{ik\cdot x},
    \qquad
    g(x)=\sum_{\ell\in \mathbb Z^2}\widehat g(\ell)e^{i\ell\cdot x}.
\]
Then
\[
    \widehat{fg}(m)
    =
    \sum_{k\in\mathbb Z^2}
    \widehat f(k)\widehat g(m-k).
\]
Therefore,
\[
    \widehat{J^s(fg)}(m)
    =
    \langle m\rangle^s
    \sum_{k\in\mathbb Z^2}
    \widehat f(k)\widehat g(m-k).
\]
Using the elementary inequality
\[
    \langle m\rangle^s
    \leq
    C_s\bigl(\langle k\rangle^s+\langle m-k\rangle^s\bigr),
\]
we get
\[
\begin{aligned}
    \left|\widehat{J^s(fg)}(m)\right|
    &\leq
    C_s
    \sum_{k\in\mathbb Z^2}
    \langle k\rangle^s
    |\widehat f(k)|
    |\widehat g(m-k)| \\
    &\quad
    +
    C_s
    \sum_{k\in\mathbb Z^2}
    |\widehat f(k)|
    \langle m-k\rangle^s
    |\widehat g(m-k)| .
\end{aligned}
\]
Equivalently,
\[
    \left|\widehat{J^s(fg)}(m)\right|
    \leq
    C_s
    \bigl(
        |\widehat{J^s f}| * |\widehat g|
    \bigr)(m)
    +
    C_s
    \bigl(
        |\widehat f| * |\widehat{J^s g}|
    \bigr)(m),
\]
where \(*\) denotes convolution on \(\mathbb Z^2\).

Taking the \(\ell^2_m(\mathbb Z^2)\) norm and applying Young's convolution
inequality gives
\[
\begin{aligned}
    \|J^s(fg)\|_{L^2(\mathbb T^2)}
    &\leq
    C_s
    \|\widehat{J^s f}\|_{\ell^2}
    \|\widehat g\|_{\ell^1}
    +
    C_s
    \|\widehat f\|_{\ell^1}
    \|\widehat{J^s g}\|_{\ell^2}.
\end{aligned}
\]
By Plancherel's theorem,
\[
    \|\widehat{J^s f}\|_{\ell^2}
    =
    \|J^s f\|_{L^2(\mathbb T^2)},
    \qquad
    \|\widehat{J^s g}\|_{\ell^2}
    =
    \|J^s g\|_{L^2(\mathbb T^2)}.
\]

It remains to control the \(\ell^1\) norms of the Fourier coefficients. Since
\(s>1\) and the dimension is \(2\), we have
\[
    \sum_{k\in\mathbb Z^2}\langle k\rangle^{-2s}<\infty.
\]
Thus, by Cauchy-Schwarz,
\[
\begin{aligned}
    \|\widehat f\|_{\ell^1}
    &=
    \sum_{k\in\mathbb Z^2}
    |\widehat f(k)| \\
    &=
    \sum_{k\in\mathbb Z^2}
    \langle k\rangle^{-s}
    \langle k\rangle^s
    |\widehat f(k)| \\
    &\leq
    \left(
        \sum_{k\in\mathbb Z^2}
        \langle k\rangle^{-2s}
    \right)^{1/2}
    \left(
        \sum_{k\in\mathbb Z^2}
        \langle k\rangle^{2s}
        |\widehat f(k)|^2
    \right)^{1/2} \\
    &\leq
    C_s
    \|J^s f\|_{L^2(\mathbb T^2)}.
\end{aligned}
\]
Similarly,
\[
    \|\widehat g\|_{\ell^1}
    \leq
    C_s
    \|J^s g\|_{L^2(\mathbb T^2)}.
\]
Substituting these two estimates into the convolution bound yields
\[
\begin{aligned}
    \|J^s(fg)\|_{L^2(\mathbb T^2)}
    &\leq
    C_s
    \|J^s f\|_{L^2}
    \|J^s g\|_{L^2}
    +
    C_s
    \|J^s f\|_{L^2}
    \|J^s g\|_{L^2} \\
    &\leq
    C_s
    \|J^s f\|_{L^2(\mathbb T^2)}
    \|J^s g\|_{L^2(\mathbb T^2)}.
\end{aligned}
\]
Now take \(g=\overline f\). Since \(J^s\) has real-valued Fourier multiplier
\(\langle k\rangle^s\), it commutes with complex conjugation in the sense that
\[
    \|J^s\overline f\|_{L^2(\mathbb T^2)}
    =
    \|J^s f\|_{L^2(\mathbb T^2)}.
\]
Therefore,
\[
\begin{aligned}
    \|J^s(|f|^2)\|_{L^2(\mathbb T^2)}
    &=
    \|J^s(f\overline f)\|_{L^2(\mathbb T^2)} \\
    &\leq
    C_s
    \|J^s f\|_{L^2(\mathbb T^2)}
    \|J^s\overline f\|_{L^2(\mathbb T^2)} \\
    &=
    C_s
    \|J^s f\|_{L^2(\mathbb T^2)}^2.
\end{aligned}
\]
This proves the claim.

 \end{proof}

 \section{Kato-Ponce Estimate on $\T^d$} \label{sec:apB}

This section is devoted to the Kato-Ponce estimate \cite{KatoPonce} on $\T^d = \R^d\backslash (2\pi \Z)^d$ expressed in terms of the multiplier $J^s$ \eqref{eq:Js on torus} for $s > \frac{d}{2}$, (see Lemma \ref{lem:KP}). As mentioned in the introduction, our approach is inspired by the proof of the Kato-Ponce estimate on $\T$ obtained by Kenig and Ionescu \cite{IoKe2007} and a transference principle formulated and proved by Roncal and Stinga \cite{RoncalStinga} for the fractional laplacian. However, since we prove the Kato-Ponce estimate for the multiplier $J^s$, we will first formulate and prove a transference formula for $J^s$.

 \subsection{Transference Formula for $J^s$}
    The point $(e^{iz_1},\dots, e^{iz_d}) \in \T^d$ is uniquely identified with $z = (z_1,\dots, z_d) \in Q_d : = (-\pi,\pi]^d$. 
 Following the example of Roncal and Stinga \cite{RoncalStinga} we define two operators. 

 \begin{definition}\label{def:rep and period}
     For a function $v$ on $\T^d$ we define its \textit{repetition} $Rv:\R^d \to \R$ by 
     \begin{align}
         (Rv)(x) = \sum\limits_{k \in \Z^d}v(x - 2\pi k)\1_{Q_d}(x - 2\pi k), \qquad x \in \R^d.
     \end{align}
     This is the $Q_d$-periodic function on $\R^d$ that coincides with $v$ on $\T^d$ (where $\T^d$ has been identified with $Q_d$). 

     For a function $u:\R^d \to \R$ we define its \textit{periodization} as the function $p_\Sigma u:\T^d \to \R$ given formally by 
     \begin{align}
         (p_\Sigma u)(z) = \sum\limits_{k \in \Z^d}u(z + 2\pi k), \qquad z \in \T^d.
     \end{align}
 \end{definition}
Now we are ready to state the Transference Formula on $\T^d$. 
 \begin{proposition}[Transference Formula] \label{prop:transference} Let $J^s_{\T^d}$ be as defined in \eqref{eq:Js on torus} and 
 \begin{align*}
     \widehat{J^s_{\R^d}g}(\xi) = (1 + |\xi|^2)^{s/2}\hat{g}(\xi).
 \end{align*}
 \begin{enumerate}[label = (\alph*)]
     \item Let $\varphi \in \mathcal{S}(\R^d)$. Then 
     \begin{align}
         [p_\Sigma(J^s_{\R^d}\varphi)](z) = J^s_{\T^d}(p_{\Sigma}\varphi)(z).
     \end{align}
     \item Let $v \in C^\infty(\T^d)$. Then its repetition $Rv$ satisfies
     \begin{align}
         \int_{\R^d}(Rv)J^s_{\R^d}\varphi\,dx = \int_{\T^d}v(z)J^s_{\T^d}(p_\Sigma \varphi)\,dz \qquad \varphi \in \mathcal{S}(\R^d).
     \end{align}
     That is, when evaluated on periodizations of Schwartz functions, the periodic distribution of $J^s_{\T^d}v$ coincides with $J^s_{\R^d}(Rv)$ in the sense of distributions. 
 \end{enumerate} 
 \end{proposition}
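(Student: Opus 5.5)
The plan is to work entirely on the Fourier side, using the Poisson summation formula to link the two settings. With the normalization $u_{N,\boldsymbol\ell} = (2\pi)^{-d}\int_{\T^d} u e^{-i\boldsymbol\ell\cdot z}\,dz$ and $\hat\varphi(\xi)=\int_{\R^d}\varphi(x)e^{-ix\cdot\xi}\,dx$, the periodization of a Schwartz function $\psi$ has Fourier coefficients
\[
\widehat{p_\Sigma\psi}(\mathbf k) = (2\pi)^{-d}\hat\psi(\mathbf k), \qquad \mathbf k\in\Z^d .
\]
To verify this, one checks that the series $\sum_{k}\psi(z+2\pi k)$ converges absolutely and uniformly together with all derivatives, which follows from the rapid decay of $\psi$. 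One can then integrate term by term over $Q_d$ and unfold the sum of integrals over translates of $Q_d$ into a single integral over $\R^d$. This is the only analytic input needed.

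For part (a), I would first observe that $J^s_{\R^d}\varphi\in\mathcal S(\R^d)$ whenever $\varphi\in\mathcal S(\R^d)$. This holds because $(1+|\xi|^2)^{s/2}$ is smooth with polynomially bounded derivatives, so it is a multiplier on $\mathcal S$. Applying the coefficient identity with $\psi=J^s_{\R^d}\varphi$ gives
\[
\widehat{p_\Sigma(J^s_{\R^d}\varphi)}(\mathbf k)=(2\pi)^{-d}(1+|\mathbf k|^2)^{s/2}\hat\varphi(\mathbf k).
\]
The right-hand side equals $(1+|\mathbf k|^2)^{s/2}\widehat{p_\Sigma\varphi}(\mathbf k)=\widehat{J^s_{\T^d}(p_\Sigma\varphi)}(\mathbf k)$. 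Both functions are smooth on $\T^d$ and have the same Fourier coefficients, so they agree pointwise.

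For part (b), I would first unfold the integral over $\R^d$: write $\R^d=\bigcup_k (Q_d+2\pi k)$ and use the periodicity of $Rv$ to get
\[
\int_{\R^d}(Rv)\,J^s_{\R^d}\varphi\,dx=\int_{Q_d} v(z)\sum_k (J^s_{\R^d}\varphi)(z+2\pi k)\,dz=\int_{\T^d}v\,p_\Sigma(J^s_{\R^d}\varphi)\,dz .
\]
Interchanging the sum and the integral is justified by the absolute convergence noted above together with the boundedness of $v$. Part (a) then replaces $p_\Sigma(J^s_{\R^d}\varphi)$ by $J^s_{\T^d}(p_\Sigma\varphi)$, which is the claimed identity. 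To obtain the distributional interpretation stated in the proposition, note that $J^s_{\T^d}$ is symmetric on $\T^d$ because its multiplier is real and even. Hence the right-hand side equals $\int_{\T^d}(J^s_{\T^d}v)\,p_\Sigma\varphi$, and unfolding once more gives $\int_{\R^d} R(J^s_{\T^d}v)\,\varphi$. This shows that $J^s_{\R^d}(Rv)=R(J^s_{\T^d}v)$ as tempered distributions.

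The computations themselves are routine. The main points needing care are checking that $J^s_{\R^d}$ preserves $\mathcal S$, and justifying the interchange of summation and integration in the unfolding. Both reduce to uniform decay bounds of the form $|\partial^\alpha\psi(x)|\lesssim_{\alpha,M}\langle x\rangle^{-M}$ for Schwartz $\psi$.
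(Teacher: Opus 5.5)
Your proposal is correct and follows the paper's proof. In part (a) you use the coefficient identity $\widehat{p_\Sigma\psi}(\mathbf k)\propto\hat\psi(\mathbf k)$ together with $J^s_{\R^d}\mathcal S(\R^d)\subset\mathcal S(\R^d)$; the paper obtains that identity by citing Poisson summation, while you get it by integrating term by term and unfolding, which amounts to the same thing. In part (b) you unfold $\int_{\R^d}$ over the translates $Q_d+2\pi k$ and apply (a), exactly as the paper does. Your use of the symmetry of $J^s_{\T^d}$ to conclude $J^s_{\R^d}(Rv)=R(J^s_{\T^d}v)$ in $\mathcal S'(\R^d)$ actually proves the distributional interpretation that the paper only asserts. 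Your explicit tracking of the $(2\pi)^{-d}$ factors also cleans up constants that the paper handles loosely.
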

\begin{remark}
  In the above proposition we show a transference formula analogous to the one of Roncal and Stinga for the fractional laplacian \cite{RoncalStinga}. However, in the case of $J^s$ for $s \geq 1$ rather than the fractional laplacian, Proposition \ref{prop:transference} does not require an assumption on the decay of the Fourier coefficients of $v$. \end{remark}
 \begin{proof}

\begin{enumerate}[label = (\alph*)]
    \item
First, we justify the identity
\begin{align}\label{eq:periodic identity}
    p_{\Sigma}(J^s_{\R^d}\varphi) = J^s_{\T^d}(p_\Sigma \varphi)
\end{align}
 for $\varphi \in \mathcal{S}(\R^d)$. 

 Since $\varphi \in \mathcal{S}(\R^d)$, $J^s_{\R^d}\varphi \in \mathcal{S}(\R^d)$. Thus, for every $N > d$, we have $|J^s_{\R^d}\varphi(x)| \leq C_N(1 + |x|)^{-N}$. 

 Then 
 \begin{align}
     \left|p_{\Sigma}(J^s_{\R^d}\varphi)(x)\right| & =\left|\sum\limits_{k \in \Z^d}J^s_{\R^d}\varphi(x + 2\pi k)\right|\notag\\
     & \leq C_N\sum\limits_{k \in \Z^d}(1 + |x + 2\pi k|)^{-N}\notag \\
     &\lesssim \int_{\R^d}(1 + |x + 2\pi \xi|)^{-N}d\xi < \infty.
 \end{align}
 so the LHS of \eqref{eq:periodic identity} is well-defined.

Now we examine the RHS of \eqref{eq:periodic identity}. First, by Definition \ref{def:rep and period} of $p_{\Sigma}$,
\begin{align}
    p_{\Sigma}\varphi(z) &= \sum\limits_{k \in \Z^d}\varphi(z + 2\pi k)\notag\\
    & = \sum\limits_{k \in \Z^d}\left(\int_{\R^d}\widehat{\varphi}(\xi)e^{i\xi\cdot(z + 2\pi k)}d\xi\right).\notag
\end{align}
Let $g_z(\xi) = \widehat{\varphi}(\xi)e^{i\xi\cdot z}$. Then, applying the Poisson summation formula \cite[Theorem 3.2.8]{Grafakos},
\begin{align}\label{eq:get fourier coefficients}
    p_{\Sigma}\varphi(z) & = \sum\limits_{k \in \Z^d}(g_z)^{\vee}(k) = \sum\limits_{m \in \Z^d}g_z(m) = \sum\limits_{m \in \Z^d}\widehat{\varphi}(m)e^{im\cdot z},
\end{align}
which implies \begin{align}\label{eq:ps fourier coeff}\widehat{(p_\Sigma\varphi)}(m) = \widehat{\varphi}(m)\end{align} for each $m \in \Z^d$. 

Using \eqref{eq:get fourier coefficients}, we can compute the periodization of $J^s_{\R^d}\varphi$ as follows,
\begin{align}
    [p_\Sigma(J^s_{\R^d}\varphi)](z)& = \sum\limits_{k \in \Z^d}\widehat{J^s_{\R^d}\varphi}(k)e^{ik\cdot z} \notag\\
    & = \sum\limits_{k \in \Z^d}\langle k \rangle^{s}\widehat{\varphi}(k)e^{ik\cdot z}\notag\\
    & = \sum\limits_{k \in \Z^d}\langle k \rangle^s \widehat{p_{\Sigma}\varphi}(k)e^{ik\cdot z}\notag\\
    & = \sum\limits_{k \in \Z^d}\widehat{J^s_{\T^d}p_\Sigma \varphi}(k)e^{ik\cdot z}\notag\\
    & = J^s_{\T^d}(p_\Sigma \varphi)(z)
\end{align}
for each $z \in \T^d$. 

\item 
Then applying the result (a), we obtain 
\begin{align}
    \int_{\R^d}(Rv)J^s_{\R^d}\varphi\,dx &= \int_{\R^d}\Big[\sum\limits_{k \in \Z^d}v(x - 2\pi k)\1_{Q_d}(x - 2\pi k)\Big]J^s_{\R^d}\varphi(x)\,dx \notag\\
    & = \sum\limits_{k \in \Z^d}\int_{Q_d + 2\pi k}v(x - 2\pi k)J^s_{\R^d}\varphi(x)\,dx \notag\\
    & = \sum\limits_{k \in \Z^d}\int_{Q_d}v(z)J^s_{\R^d}\varphi(z + 2\pi k)\,dz\notag\\
    & = \int_{\T^d}v(z)[p_\Sigma(J^s_{\R^d}\varphi)](z)\,dz \notag\\
    & = \int_{\T^d}v(z)J^s_{\T^d}(p_\Sigma \varphi)\,dz,
\end{align}
where $Q_d = (-\pi,\pi]^d$.

\end{enumerate}
 \end{proof}

\begin{lemma}\label{lem:periodization}
     Let $h \in \mathcal{S}(\R^d)$. 
     \begin{align}
         \|p_\Sigma h\|_{L^2(\T^d)} \lesssim \|h\|_{L^2(\R^d)}
     \end{align}
 \end{lemma}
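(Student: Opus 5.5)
The plan is to compute $\|p_\Sigma h\|_{L^2(\T^d)}$ exactly on the Fourier side, and then to locate precisely where an $L^2(\R^d)$ bound can be extracted. By \eqref{eq:ps fourier coeff}, which comes from the Poisson summation step \eqref{eq:get fourier coefficients} in the proof of Proposition \ref{prop:transference}, the Fourier coefficients of $p_\Sigma h$ are $\widehat{p_\Sigma h}(m)=\widehat h(m)$ for $m\in\Z^d$. Plancherel on $\T^d$ then gives
\begin{align*}
\|p_\Sigma h\|_{L^2(\T^d)}^2 = (2\pi)^d\sum_{m\in\Z^d}|\widehat h(m)|^2 .
\end{align*}
So the statement reduces to comparing the lattice sum $\sum_m|\widehat h(m)|^2$ with $\|\widehat h\|_{L^2(\R^d)}^2$.

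\textbf{Main obstacle.} This comparison cannot hold with a constant independent of $h$. Take $\widehat h(\xi)=\psi(\xi/\ve)$ with $\psi\in C_c^\infty$ and $\psi(0)=1$. Then the lattice sum equals $1$ for small $\ve$, while $\|h\|_{L^2}^2\sim\ve^d\to0$. Equivalently, on the physical side, $h$ is spread over about $\ve^{-d}$ periods, and the translates $h(\cdot+2\pi k)$ add coherently in $p_\Sigma h$. The implicit constant in the statement must therefore be read as depending on a fixed spatial scale of $h$, such as the radius of a ball containing its support. That is the setting in which the lemma is used in the Kato--Ponce argument, where $h$ is built from a fixed cutoff. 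I would prove the lemma in that form, with constant $C(R,d)$ for $\operatorname{supp}h\subset B(0,R)$.

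\textbf{Physical-side argument.} For $z\in Q_d$, only indices $k$ with $|2\pi k|\le R+\pi\sqrt d$ contribute to $p_\Sigma h(z)=\sum_k h(z+2\pi k)$. There are at most $K=K(R,d)$ such $k$. Cauchy--Schwarz in $k$ gives
\begin{align*}
|p_\Sigma h(z)|^2\le K\sum_k|h(z+2\pi k)|^2 .
\end{align*}
Integrating over $Q_d$ and using that the sets $Q_d+2\pi k$ tile $\R^d$,
\begin{align*}
\|p_\Sigma h\|_{L^2(\T^d)}^2\le K\|h\|_{L^2(\R^d)}^2 .
\end{align*}

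\textbf{Alternative for non-compactly supported Schwartz $h$.} Here I would instead bound $\sup_{|\xi-m|\le1/2}|\widehat h|^2$ by a local Sobolev norm of $\widehat h$. Summing over $m$ gives a constant multiple of $\|(1+|x|)^{d}h\|_{L^2}^2$. This again makes explicit that the constant depends on the decay scale of $h$ rather than only on $\|h\|_{L^2}$.
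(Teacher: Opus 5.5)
Your proposal is correct, and your counterexample is valid. With $\widehat h(\xi)=\psi(\xi/\ve)$, the periodization $p_\Sigma h$ is (up to normalization) the constant $\widehat h(0)=1$ once $\ve$ is small, while $\|h\|_{L^2(\R^d)}\sim\ve^{d/2}\to0$. So the lemma as literally stated, with an implicit constant independent of $h\in\mathcal S(\R^d)$, is false.

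Your physical-side argument is the same tiling argument the paper uses, except at one step. The paper bounds $\int_{Q_d}|\sum_k h(x+2\pi k)|^2\,dx$ directly by $\sum_k\int_{Q_d}|h(x+2\pi k)|^2\,dx$. That amounts to $|\sum_k a_k|^2\le\sum_k|a_k|^2$, which is justified only when at most one translate $h(\cdot+2\pi k)$ is nonzero at each point. (The paper also drops a square in its last line.) Your Cauchy--Schwarz factor $K(R,d)$, counting how many translates of $\operatorname{supp}h$ can meet a point of $Q_d$, is exactly what is missing. If $\operatorname{supp}h$ lies in an open cube of side at most $2\pi$, then $K=1$ and the paper's inequality holds as written. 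Your example shows that some hypothesis of this kind is unavoidable.

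One point in your framing needs correcting. In the Kato--Ponce proof the lemma is applied to $h=[J^s_{\R^d},\theta_i(Rf)]g_i$, and this $h$ is not compactly supported. It contains $J^s_{\R^d}(\theta_i(Rf)g_i)$, and $J^s_{\R^d}$ is nonlocal unless $s$ is an even integer. So your $C(R,d)$ version does not cover that application; the weighted variant is the relevant one there.

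The weighted variant is also easiest on the physical side. Cauchy--Schwarz with weights $\langle z+2\pi k\rangle^{\mp\sigma}$, together with $\sup_z\sum_k\langle z+2\pi k\rangle^{-2\sigma}<\infty$, gives $\|p_\Sigma h\|_{L^2(\T^d)}\lesssim_\sigma\|\langle x\rangle^{\sigma}h\|_{L^2(\R^d)}$ for every $\sigma>d/2$.

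Using this in the Kato--Ponce argument means splitting the commutator with a fixed ball containing a neighborhood of $\operatorname{supp}\theta_i$:
\begin{itemize}
\item Outside the ball, the commutator equals $J^s_{\R^d}(\theta_i(Rf)g_i)$ and is controlled by the off-diagonal decay of the kernel of $J^s_{\R^d}$.
\item Inside the ball, your compact-support version together with the Euclidean estimate suffices.
\end{itemize}
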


 \begin{proof}
     Recall $Q_d = [-\pi,\pi)^d$. Then for $x \in Q_d$, \begin{align*}
         p_\Sigma h(x) = \sum\limits_{k \in \Z^{d}}h(x+2\pi k)
     \end{align*}

     Integrating over $Q_d$, 
     \begin{align*}
         \|p_\Sigma h\|_{L^2(\T^d)}^2 &= \int_{Q_d}|p_\Sigma h(x)|^2\,dx\\
         & \leq\sum\limits_{k \in \Z^d}\int_{Q_d}|h(x + 2\pi k)|^2\,dx.
     \end{align*}
     Now let $y = x + 2\pi k$. Then $y \in Q_d + 2\pi k$ and $Q_d + 2\pi k$ tile $\R^d$. Thus, 
     \begin{align*}
         \|p_\Sigma h\|_{L^2(\T^d)}^2 &\leq \sum\limits_{k \in \Z^d}\int_{Q_d + 2\pi k}|h(y)|^2\,dy\\
         & \leq \|h\|_{L^2(\R^d)}
     \end{align*}
\end{proof}
\subsection{Statement and proof of Kato-Ponce estimate on $\T^d$}
 \begin{lemma}[Kato-Ponce Inequality for $\T^d$]\label{lem:KP}
    Let $s \geq 1$ and $f,g \in H^s(\T^d)$. Then 
    \begin{align}\label{eq:KP for period}
        \|J^s_{\T^d}(fg) - fJ_{\T^d}^sg\|_{L^2(\T^d)} \leq c\Big\{\|J^s_{\T^d}f\|_{L^2(\T^d)}\|g\|_{L^\infty(\T^d)} + (\|f\|_{L^\infty(\T^d)} + \|\nabla f\|_{L^\infty(\T^d)})\|J^{s-1}_{\T^d}g\|_{L^2(\T^d)}\Big\}.
    \end{align}
\end{lemma}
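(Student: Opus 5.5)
We will prove \eqref{eq:KP for period} by transferring the known Euclidean Kato--Ponce commutator estimate on $\R^d$ to the torus, using Proposition~\ref{prop:transference} and Lemma~\ref{lem:periodization}. Alternatively, and as a fallback, we would argue directly on the Fourier side. The plan below combines the two: the transference gives the structure, and the frequency-side argument handles the localization issues.

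\textbf{Step 1: localization.} Fix a smooth bump $\chi\in C_c^\infty(\R^d)$ with $\chi\equiv 1$ on a neighborhood of $\overline{Q_d}$ and support in $3Q_d$. Also fix a smooth partition of unity $\sum_{k\in\Z^d}\psi(x-2\pi k)\equiv 1$, with $\psi\in C_c^\infty$ supported near $Q_d$, so that $p_\Sigma(\psi\, Rh)=h$ for any periodic $h$.

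For $f,g$ smooth on $\T^d$ (the general case follows by density, since both sides are continuous in $H^s$ for $s>d/2$, or by approximation with $\Pi_N$), set
\[F=\chi Rf,\qquad G=\psi Rg .\]
Then $FG=\psi R(fg)$, because $\chi\psi=\psi$. Hence $p_\Sigma(FG)=fg$ and $p_\Sigma G=g$.

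\textbf{Step 2: transfer the commutator.} Proposition~\ref{prop:transference}(a) gives $J^s_{\T^d}(fg)=p_\Sigma J^s_{\R^d}(FG)$ and $J^s_{\T^d}g=p_\Sigma J^s_{\R^d}G$. Since $f$ is periodic, $f\,p_\Sigma(\cdot)=p_\Sigma(Rf\cdot)$. We obtain
\[
J^s_{\T^d}(fg)-fJ^s_{\T^d}g=p_\Sigma\big[J^s_{\R^d}(FG)-FJ^s_{\R^d}G\big]+p_\Sigma\big[(F-Rf)J^s_{\R^d}G\big].
\]
For the first term, Lemma~\ref{lem:periodization} (extended from Schwartz functions by density) together with the Euclidean Kato--Ponce inequality gives the bound
\[C\|J^sF\|_{L^2}\|G\|_{L^\infty}+C\|\nabla F\|_{L^\infty}\|J^{s-1}G\|_{L^2}.\]
Since $s\ge1$ we may also use the form with $\|F\|_{L^\infty}+\|\nabla F\|_{L^\infty}$. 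We then bound the Euclidean quantities by toroidal ones:
\begin{itemize}
\item $\|G\|_{L^\infty}\le\|g\|_{L^\infty}$ and $\|\nabla F\|_{L^\infty}\lesssim\|f\|_{L^\infty}+\|\nabla f\|_{L^\infty}$ are immediate.
\item $\|J^sF\|_{L^2(\R^d)}\lesssim\|J^s f\|_{L^2(\T^d)}$ and $\|J^{s-1}G\|_{L^2(\R^d)}\lesssim \|J^{s-1}g\|_{L^2(\T^d)}$ are localized Sobolev bounds. For integer $s$ they are just the Leibniz rule. For general $s$ we use complex interpolation between integers, or write $J^s(\chi Rf)$ via Fourier series on a large torus $\R^d/(6\pi\Z)^d$ containing $\operatorname{supp}\chi$.
\end{itemize}

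\textbf{Step 3: the error term (main obstacle).} The hard part is the nonlocal error $p_\Sigma[(F-Rf)J^s_{\R^d}G]$. Here $F-Rf$ vanishes near $\overline{Q_d}\supset \operatorname{supp}\psi$, and $J^s_{\R^d}G$ is evaluated away from $\operatorname{supp}G$. Away from the diagonal the kernel of $J^s_{\R^d}$ is smooth and exponentially decaying, with derivatives of any order.

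We therefore plan to write $J^s_{\R^d}=J^{s-1}_{\R^d}\circ J^1_{\R^d}$ with $J^1=J^{-1}(1-\Delta)$, and move the derivatives onto the smooth off-diagonal kernel. This should yield
\[\|(F-Rf)J^sG\|_{L^2}\lesssim \|f\|_{L^\infty}\|J^{-M}G\|_{L^2}\lesssim \|f\|_{L^\infty}\|J^{s-1}g\|_{L^2}.\]
Periodizing with Lemma~\ref{lem:periodization} then finishes the proof, using the exponential decay to sum over translates. If the kernel bookkeeping becomes unwieldy, we would instead prove the estimate directly on $\Z^d$. That route uses a paraproduct decomposition $fg=\sum_{k\ll l}+\sum_{k\gg l}+\sum_{k\sim l}$ via the projections $Q^j$, the mean value bound $|\langle m\rangle^s-\langle \ell\rangle^s|\lesssim |m-\ell|\,\langle\ell\rangle^{s-1}$ in the low--high piece, and Bernstein inequalities on $\T^d$. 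This is the Ionescu--Kenig style argument, and it avoids transference entirely.
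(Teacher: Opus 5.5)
Your route is the paper's own. The paper also localizes with repetitions and compactly supported cutoffs. It uses a finite family $g_i=\eta_i(Rg)$ with $\sum_i p_\Sigma\eta_i=1$ in place of your single bump $\psi$, and $\theta_i$ plays the role of your $\chi$. It then transfers with Proposition~\ref{prop:transference}(a) and splits into $[J^s_{\R^d},\theta_i(Rf)]g_i$, handled by the Euclidean Kato--Ponce estimate, plus the off-support term $(\theta_i-1)(Rf)J^s_{\R^d}g_i$, handled by kernel decay. For the off-support term the paper skips your pseudo-locality bookkeeping. It pulls $f$ out of the periodization, $p_\Sigma\bigl((\theta_i-1)(Rf)J^s_{\R^d}g_i\bigr)=f\,p_\Sigma\bigl((\theta_i-1)J^s_{\R^d}g_i\bigr)$, and bounds the second factor in $L^\infty$ by the pointwise kernel bound, which gives $\|f\|_{L^2}\|g\|_{L^\infty}$. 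Your bound $\|f\|_{L^\infty}\|J^{s-1}g\|_{L^2}$ is equally admissible.

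One step does not hold as written, and it fails the same way in the paper. Lemma~\ref{lem:periodization} is false for general $h\in\mathcal S(\R^d)$; its proof uses $|\sum_k a_k|^2\le\sum_k|a_k|^2$, which is wrong. For $h(x)=e^{-|x|^2/L^2}$ one has $p_\Sigma h\approx c_dL^d$ on $\T^d$, so $\|p_\Sigma h\|_{L^2(\T^d)}/\|h\|_{L^2(\R^d)}\sim L^{d/2}$. The lemma holds only for $h$ supported in a fixed compact set, with a constant depending on that set. But $[J^s_{\R^d},F]G$ is in general not compactly supported, so your Step~2, like \eqref{eq:two terms} in the paper, needs an extra argument.

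The decay you already invoke in Step~3 supplies it:
\begin{itemize}
\item By Minkowski, $\|p_\Sigma h\|_{L^2(\T^d)}\le\sum_k\|h\|_{L^2(Q_d+2\pi k)}$.
\item The finitely many cubes meeting a fixed neighborhood of $\operatorname{supp}\chi$ contribute $\lesssim\|h\|_{L^2(\R^d)}$, to which Euclidean Kato--Ponce applies.
\item On every other cube $F=0$, so $h=J^s_{\R^d}(FG)$ there, evaluated at positive distance from $\operatorname{supp}(FG)$. Hence $|h(x)|\lesssim e^{-c|x|}\|FG\|_{L^1}\lesssim e^{-c|x|}\|f\|_{L^2}\|g\|_{L^\infty}$. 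This sums over $k$ to something dominated by $\|J^sf\|_{L^2}\|g\|_{L^\infty}$.
\end{itemize}

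Two smaller fixes are also needed.
\begin{itemize}
\item A smooth $\psi$ whose $2\pi\Z^d$-translates sum to $1$ cannot be supported in $\overline{Q_d}$, since it would vanish on $\partial Q_d$. What you actually need is $\chi\equiv1$ on a neighborhood of $\operatorname{supp}\psi$, which is easy to arrange.
\item Your density justification fails. $\|\nabla f\|_{L^\infty}$ is not controlled by $\|f\|_{H^s}$ when $s\le d/2+1$, and $\Pi_N$ is not uniformly bounded on $L^\infty$. Approximate instead by mollifications, which converge in $H^s$ and do not increase the $L^\infty$ norms on the right. Then conclude using convergence of the left side in distributions and weak lower semicontinuity of the $L^2$ norm.
\end{itemize}
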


\begin{proof}
It suffices to prove the estimate for \(f,g\in C^\infty(\mathbb T^d)\), since the general case follows by density. Choose finitely many cutoffs \(\eta_i\in C_c^\infty(\mathbb R^d)\), \(1\leq i\leq N\), such that
\[
\sum_{i=1}^N p_\Sigma \eta_i=1,
\qquad\text{on }\mathbb T^d
\]
and $\eta_i$ is supported on an open cube $Q_i \subset \R^d$ with side length less than $2\pi$, which guarantees the quotient map $\pi: \R^d \to \T^d$ is injective.

Let $Rf:
\R^d \to \R$ and $Rg: \R^d \to \R$ denote the periodic extensions of $f:\T^d \to \R$ and $g:\T^d \to \R$ given by Definition \ref{def:rep and period}. In particular, we note that  
\begin{align*}
    Rf(x + 2\pi k) = f(x), \qquad Rg(x + 2\pi k) = g(x),\qquad \forall k \in \Z^d.
\end{align*}

Set
\begin{align}\label{eq:def gi}
g_i=\eta_i(Rg)
\end{align}
Then \(g_i\in C_c^\infty(\mathbb R^d)\) and \(\sum_i p_\Sigma g_i=g\). 

By definition of $Rf$ and $p_\Sigma$ we have,
\begin{align}\label{eq:move f 1}
f(x)\,p_\Sigma\bigl(g_i\bigr)(x) &= f(x) \sum\limits_{k \in \Z^d}g_i(x + 2\pi k)\notag\\
&= \sum\limits_{k \in \Z^d}(Rf)(x + 2\pi k)g_i(x + 2\pi k)\notag\\
& = p_\Sigma\bigl((Rf)g_i\bigr).
\end{align}

As a consequence of \eqref{eq:move f 1}, we have the the following useful identity,
\begin{align}\label{eq:move f 2}
fg = f\sum\limits_{i=1}^N p_{\Sigma}g_i = \sum\limits_{i=1}^Np_{\Sigma}((Rf)g_i).
\end{align}

Using the transference formula for \(J^s\) given in Proposition \ref{prop:transference} (a) and \eqref{eq:move f 2}, we obtain

\[
J^s_{\mathbb T^d}(fg) = 
 \sum\limits_{i=1}^N J^s_{\T^d}p_\Sigma(\widetilde{f}g_i) =
\sum_{i=1}^N
p_\Sigma\bigl(J^s_{\mathbb R^d}((Rf)g_i)\bigr),
\]
and similarly
\[
fJ^s_{\mathbb T^d}g
=
\sum_{i=1}^N
p_\Sigma\bigl((Rf)J^s_{\mathbb R^d}g_i\bigr).
\]

Therefore, 
\begin{align}\label{eq:equality}
J^s_{\mathbb T^d}(fg)-fJ^s_{\mathbb T^d}g
=
\sum_{i=1}^N
p_\Sigma\left(
J^s_{\mathbb R^d}((Rf) g_i)
-
(Rf)J^s_{\mathbb R^d}g_i
\right).
\end{align}

Choose \(\theta_i\in C_c^\infty(\mathbb R^d)\) such that \(\theta_i\equiv1\) on a neighborhood of \(\operatorname{supp}\eta_i\). Thanks to \eqref{eq:def gi}, we have \((Rf) g_i=(\theta_i(Rf))g_i\). Hence
\begin{align}\label{eq:split }
J^s_{\mathbb R^d}((Rf) g_i)
-
\widetilde fJ^s_{\mathbb R^d}g_i
=
[J^s_{\mathbb R^d},\theta_i(Rf)]g_i
+
(\theta_i-1)(Rf)J^s_{\mathbb R^d}g_i.
\end{align}

Applying the triangle inequality and Lemma \ref{lem:periodization},
\begin{align}\label{eq:two terms}
&\|p_\Sigma([J^s_{\R^d},\theta_i(Rf)]g_i + (\theta_i - 1)(Rf)J^s_{\R^d}g_i)\|_{L^2(\T^d)} \notag\\
&\qquad \qquad \leq \left\|p_\Sigma\left([J^s_{\R^d},\theta_i(Rf)]g_i\right)\right\|_{L^2(\T^d)} + \left\|p_\Sigma\Big((1 - \theta_i)(Rf)J^s_{\R^d}g_i\Big)\right\|_{L^2(\T^d)}\notag\\
& \qquad \qquad \leq \|[J^s_{\R^d},\theta_i(Rf)]g_i\|_{L^2(\R^d)} + \left\|p_\Sigma\Big((1 - \theta_i)(Rf)J^s_{\R^d}g_i\Big)\right\|_{L^2(\R^d)}
\end{align}
The first term is controlled by the Euclidean Kato-Ponce estimate \cite{KatoPonce}:

\begin{align}\label{eq:Euc KP with cutoffs}
\|[J^s_{\R^d},\theta_i(Rf)]g_i\|_{L^2(\R^d)} &= \|J^s_{\mathbb R^d}(\theta_i(Rf)g_i) - \theta_i(Rf)J^s_{\R^d}g_i\|_{L^2(\mathbb R^d)}\notag\\
&\lesssim
\|J^s_{\mathbb R^d}(\theta_i(Rf))\|_{L^2(\mathbb R^d)}
\|g_i\|_{L^\infty(\mathbb R^d)} \notag \\
&\quad+
\left(
\|\theta_i(Rf)\|_{L^\infty(\mathbb R^d)}
+
\|\nabla(\theta_i(Rf))\|_{L^\infty(\mathbb R^d)}
\right)
\|J^{s-1}_{\mathbb R^d}g_i\|_{L^2(\mathbb R^d)}.
\end{align}

Since multiplication by a fixed compactly supported cutoff maps periodic Sobolev spaces into Euclidean Sobolev spaces,
\[
\|J^s_{\mathbb R^d}(\theta_i(Rf))\|_{L^2(\mathbb R^d)}
\lesssim
\|J^s_{\mathbb T^d}f\|_{L^2(\mathbb T^d)}
\]
and
\[
\|J^{s-1}_{\mathbb R^d}g_i\|_{L^2(\mathbb R^d)}
\lesssim
\|J^{s-1}_{\mathbb T^d}g\|_{L^2(\mathbb T^d)}.
\]
Moreover,
\[
\|g_i\|_{L^\infty(\mathbb R^d)}
\lesssim
\|g\|_{L^\infty(\mathbb T^d)}
\]
and
\[
\|\theta_i(Rf)\|_{L^\infty}
+
\|\nabla(\theta_i(Rf))\|_{L^\infty}
\lesssim
\|f\|_{L^\infty(\mathbb T^d)}
+
\|\nabla f\|_{L^\infty(\mathbb T^d)}.
\]
Thus, returning to \eqref{eq:Euc KP with cutoffs}

\begin{align}\label{eq:part of KP}
\|[J^s_{\mathbb R^d},\theta_i(Rf)]g_i\|_{L^2(\mathbb R^d)}
&\lesssim
\|J^s_{\mathbb T^d}f\|_{L^2(\mathbb T^d)}
\|g\|_{L^\infty(\mathbb T^d)} \notag\\
&\quad+
\left(
\|f\|_{L^\infty(\mathbb T^d)}
+
\|\nabla f\|_{L^\infty(\mathbb T^d)}
\right)
\|J^{s-1}_{\mathbb T^d}g\|_{L^2(\mathbb T^d)}.
\end{align}

Hence, in order to control \eqref{eq:two terms}, it remains to estimate the error
\[
(\theta_i-1)(Rf)J^s_{\mathbb R^d}g_i.
\]
Since \(\theta_i\equiv1\) on a neighborhood of \(\operatorname{supp}g_i\), this is an off-support term. The convolution kernel of \(J^s_{\mathbb R^d}\) is smooth away from the origin, and therefore
we have 
\begin{align*}
    |(1 - \theta_i)(x)J^s_{\R^d}g_i(x)| \lesssim C_s \|g_i\|_{L^\infty}\langle x \rangle^{-d-s}.
\end{align*}
Thus, since $Rf$ is periodic, 
\begin{align}\label{eq:error estimate}
\left\|p_{\Sigma}\Big((1 - \theta_i)(Rf)J^s_{\mathbb R^d}g_i\Big)\right\|_{L^2(\T^d)} &= \|fp_\Sigma\Big((1 - \theta_i)J^s_{\R^d}g_i\Big)\|_{L^2(\T^d)}\notag\\
& \leq \|f\|_{L^2(\T^d)}\|p_\Sigma\Big((1 - \theta_i)J^s_{\R^d} g_i)\|_{L^\infty(\T^d)}
\end{align}

Using the definition of $p_\Sigma$ and the pointwise bound above, we have 
\begin{align}
    \|p_\Sigma\Big((1 - \theta_i)J^s_{\R^d} g_i)\|_{L^\infty(\T^d)} &\leq  \sup_{x \in \T^d}C_s\|g_i\|_{L^\infty(\R^d)}\sum\limits_{k \in \Z^d}\langle x + 2\pi k\rangle^{-d-s}\notag\\
    & \leq C_s \|g_i\|_{L^\infty}. 
\end{align}

Thus, since $\|g_i\|_{L^\infty(\R^d)} \lesssim \|g\|_{L^\infty(\T^d)}$, the error term is dominated by $C \|f\|_{L^2(\T^d)}\|g\|_{L^\infty(\T^d)}$. 

Finally, this estimate combined with \eqref{eq:equality} - \eqref{eq:two terms} and \eqref{eq:part of KP}, summing in $1 \leq i\leq N$, we obtain
\begin{align}
    \|J^s_{\mathbb T^d}(fg)-fJ^s_{\mathbb T^d}g\|_{L^2(\T^d)} &\lesssim \Bigg(\|J^s_{\mathbb T^d}f\|_{L^2(\mathbb T^d)}
\|g\|_{L^\infty(\mathbb T^d)} \notag\\
&\quad+
\left(
\|f\|_{L^\infty(\mathbb T^d)}
+
\|\nabla f\|_{L^\infty(\mathbb T^d)}
\right)
\|J^{s-1}_{\mathbb T^d}g\|_{L^2(\mathbb T^d)}\Bigg).
\end{align}

\end{proof}

\end{document}